\documentclass[11pt]{article}
\usepackage[T1]{fontenc}
\usepackage{lmodern,amsmath,amssymb,amsthm,mathtools}
\usepackage[a4paper,margin=28mm]{geometry}
\usepackage{microtype,amscd}
\usepackage{indentfirst}
\usepackage[colorlinks=true,allcolors=blue,hyperfootnotes=false]{hyperref}
\newtheorem{theorem}{Theorem}[section]
\newtheorem{proposition}[theorem]{Proposition}
\newtheorem{lemma}[theorem]{Lemma}

\theoremstyle{definition}
\theoremstyle{remark}
\newcommand{\Z}{\mathbb Z}\newcommand{\Q}{\mathbb Q}
\newcommand{\C}{\mathbb C}\newcommand{\R}{\mathbb R}\newcommand{\N}{\mathbb Z_{\geq0}}
\newcommand{\A}{\mathbb A}\newcommand{\Gm}{\mathbb G_m}
\newcommand{\GL}{\operatorname{GL}}\newcommand{\Sym}{\operatorname{Sym}}
\newcommand{\Hom}{\operatorname{Hom}}\newcommand{\Ad}{\operatorname{Ad}}
\newcommand{\sgn}{\operatorname{sgn}}\newcommand{\IH}{\operatorname{IH}}
\newcommand{\supp}{\operatorname{supp}}
\newcommand{\ssmod}{\mathrm{ss}}
\newcommand{\qbin}[3]{\begin{bmatrix}#1\\#2\end{bmatrix}_{#3}}
\newcommand{\pos}{\Z_{\geq0}[v^{\pm1}]}
\numberwithin{equation}{section}
\title{A Note on Universal Positivity\\of Rank-Two Quantum Greedy Elements}
\author{Qiyue Tang}
\date{26 September 2026}
\begin{document}
\maketitle
\begingroup
\renewcommand{\thefootnote}{}
\footnotetext{\textit{2020 Mathematics Subject Classification.}
Primary 13F60; Secondary 16G20, 17B37.\newline
\textit{Key words and phrases.} Quantum cluster algebra, greedy basis,
universal positivity, quiver moduli, scattering diagram.}
\endgroup
\begin{abstract}
We prove universal positivity of rank-two quantum greedy elements when
$b\mid c$ or $c\mid b$.
\end{abstract}

\section{Introduction}
An element of a rank-two cluster algebra has a Laurent expansion in
each cluster. Universal positivity requires nonnegative coefficients in
all of these expansions. We study this property for the quantum greedy basis.

The greedy basis selects elements indexed by integer denominator labels.
Its defining recurrence is accompanied by a support characterization:
the prescribed leading monomial, a restricted support region, and
divisibility conditions from the neighboring clusters determine an element
uniquely. Lee--Li--Rupel--Zelevinsky constructed the quantum greedy basis
\cite[Theorems~1.7 and 1.9]{LLRZ}. Its coefficients lie in
$\Z[v^{\pm1}]$. Specializing $v=1$ can conceal negative coefficients,
and the alternating sums in the greedy recurrence make quantum
positivity a separate question from existence.

We consider the divisibility range $b\mid c$ or $c\mid b$ proposed in
\cite[Conjecture~1.10]{LLRZ}. When $c=kb$, one exchange factor splits into $k$ factors with a common
quantum parameter. Schur positivity in the resulting source colors
gives positive quantum differences after these factors are recombined
on the original integer lattice.

\subsection{Quantum greedy elements}

Let $b,c$ be positive integers. We use the skew field generated over $\Q(v)$
by $X_1,X_2$, subject to $X_2X_1=v^2X_1X_2$. Put
\begin{equation}\label{eq:torus}
 X^{(r,s)}=v^{rs}X_1^rX_2^s,
 \qquad X^mX^n=v^{-\det(m,n)}X^{m+n}.
\end{equation}
The parameter $v$ is indeterminate. The coefficient-free quantum cluster
algebra is the $\Z[v^{\pm1}]$-subalgebra of this skew field generated by
the variables $X_m$, $m\in\Z$. The ordered pairs $(X_m,X_{m+1})$ are
its clusters, and the variables satisfy
\begin{equation}\label{eq:exchange}
 X_{m+1}X_{m-1}=\begin{cases}
 v^bX_m^b+1,&m\text{ odd},\\
 v^cX_m^c+1,&m\text{ even}.
 \end{cases}
\end{equation}
Each adjacent pair satisfies $X_{m+1}X_m=v^2X_mX_{m+1}$, so
$v^{rs}X_m^rX_{m+1}^s$ is the corresponding normalized Laurent monomial.
Throughout, positive means coefficientwise nonnegative, with zero allowed;
for example, $v^2+v^{-2}$ is positive but $v^2-1+v^{-2}$ is not,
although its specialization at $v=1$ is positive.

The symmetric quantum integers and binomial coefficients are denoted by
\[
 [n]_u=\frac{u^n-u^{-n}}{u-u^{-1}},\qquad
 \qbin nk u=\prod_{i=1}^k\frac{[n-k+i]_u}{[i]_u}
 \quad(0\leq k\leq n).
\]
For nonnegative $n$, the binomial is zero outside this range. Write
$[a]_+=\max(a,0)$.

For $(A,N)\in\Z^2$, the quantum greedy element is
\begin{equation}\label{eq:greedy}
 G_{bc}(A,N)=\sum_{p,q\geq0}e_{A,N}(p,q)
                  X^{(-A+bp,-N+cq)}.
\end{equation}
Here $e_{A,N}(0,0)=1$, and the remaining coefficients are characterized by
\begin{equation}\label{eq:recurrence}
e_{A,N}(p,q)=
\begin{cases}
\displaystyle\sum_{i=1}^{p}(-1)^{i-1}e_{A,N}(p-i,q)
 \qbin{[N-cq]_++i-1}{i}{v^b},&cAq\leq bNp,\\[2mm]
\displaystyle\sum_{j=1}^{q}(-1)^{j-1}e_{A,N}(p,q-j)
 \qbin{[A-bp]_++j-1}{j}{v^c},&cAq\geq bNp.
\end{cases}
\end{equation}
The two expressions agree on the boundary. Existence, finite support and
uniqueness are supplied by \cite[Theorem~1.7]{LLRZ}.

\begin{theorem}\label{thm:main}
Suppose that $b\mid c$ or $c\mid b$. For every $(A,N)\in\Z^2$ and
$m\in\Z$, all coefficients of $G_{bc}(A,N)$ in the normalized monomials
$v^{rs}X_m^rX_{m+1}^s$ belong to $\pos$.
\end{theorem}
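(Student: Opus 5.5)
We will first reduce to a single cluster and a single divisibility case, then prove positivity there via quiver moduli and Schur positivity.

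\emph{Reduction to the initial cluster.} The quantum greedy basis is invariant under the rank-two mutations. These send the cluster $(X_m,X_{m+1})$ to $(X_{m+1},X_{m+2})$ and interchange the roles of $b$ and $c$ at each step; the relevant statement is \cite[Theorem~1.9]{LLRZ}. So every greedy element expanded in the cluster $(X_m,X_{m+1})$ is again a greedy element $G_{bc}(A',N')$ or $G_{cb}(A',N')$, expanded in the initial cluster $(X_1,X_2)$. There is also an anti-automorphism (the bar-involution combined with the swap $X_1\leftrightarrow X_2$) that exchanges $b$ and $c$. Together these reduce Theorem~\ref{thm:main} to one claim: for $c=kb$ with $k\geq1$, every coefficient $e_{A,N}(p,q)$ in \eqref{eq:greedy} lies in $\pos$. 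If $A\leq0$ or $N\leq0$, the recurrence \eqref{eq:recurrence} collapses to a single quantum binomial expansion, which is visibly positive. So we may assume $A,N>0$.

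\emph{Splitting the $c$-factor.} Write $v^cX_2^c+1$ in terms of $u=v^b$, so that $c=kb$ gives exponents $kb$. The idea is to replace the generalized Kronecker setting by a quiver with one sink and $k$ source colors. Each color carries $b$ arrows and exchange factor $u^bY_j^b+1$, with all colors sharing the same quantum parameter $u$. In this setting the coefficient $e_{A,N}(p,q)$ should appear as the specialization $Y_1=\dots=Y_k$ of a generating function whose $(q_1,\dots,q_k)$-graded pieces are the Poincar\'e polynomials of intersection cohomology $\IH$ of suitable framed or stable quiver moduli, or equivalently theta-function coefficients of the associated quantum scattering diagram. Purity of $\IH$ gives coefficients in $\pos$ for each graded piece.

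The step that requires care is the symmetric structure. The grading by colors carries an action of $\Sym_k$ permuting the colors. The greedy coefficient is obtained after recombining onto the original lattice, which takes alternating sums of the form occurring in \eqref{eq:recurrence}. We would show that each graded piece, viewed as a $\Sym_k$-representation graded by $u$, is Schur positive. The recombination is then a pairing of this representation with products of complete homogeneous symmetric functions. This identifies the quantum differences as multiplicities of Schur functors, which are nonnegative.

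\emph{Main obstacle.} The hardest step is matching the greedy support condition with the moduli description. Concretely, we must show that the generating function constructed from colored moduli satisfies the leading-term and support conditions of \cite[Theorem~1.7]{LLRZ}, including the divisibility condition on the wall $cAq=bNp$. Uniqueness then forces it to equal $G_{bc}(A,N)$. The first approach we would try is to verify the recurrence \eqref{eq:recurrence} directly by a wall-crossing computation across the single ray of slope $c/b$. On each side we use the $\Sym_k$-equivariant Harder--Narasimhan recursion, and we check agreement on the boundary through the $u$-binomial identity $\qbin{n+i-1}{i}{u}=\sum$ over compositions into $k$ colors. This colored Vandermonde identity is exactly what is available when $c=kb$.
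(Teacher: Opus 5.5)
Your reduction to $b\mid c$, the initial cluster, and $A,N>0$ is sound in substance. It parallels the paper's use of the recurrence symmetry and the semilinear reflections $\sigma_\ell$. Your ingredients are also the right ones: splitting into $k$ source colors of a star quiver, Schur positivity, and LLRZ uniqueness.

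\textbf{The core step has a genuine gap: you never address integrality on the original lattice $\Z^2$.}

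\emph{The recombination is wrong.} The star lattice maps only onto $\Z(-b,0)+\Z(0,c)=b\Z\times c\Z$. The colors must be recombined by the \emph{principal} specialization $z^{U_i}\mapsto v^{b(k-1-2i)}X^{(-b,0)}$. Setting $Y_1=\cdots=Y_k$ is only the classical limit: the $k$ factors $\Psi_{v^{-c}}$ merge into $\Psi_{v^{-b}}(X^{(-b,0)})$ only with these shifts.

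\emph{Star-lattice positivity does not reach $\Z^2$.} Positivity of colored pieces, whether from purity or from Davison--Mandel, says nothing directly about monomials $X^m$ with $m\notin b\Z\times c\Z$. Such monomials occur as greedy leading exponents $(-A,-N)$ and throughout wall-crossing. A wall on the ray $D=(-bp,cq)$ acts by $X^m\mapsto X^mf(v^{-2\det(D,m)}Z)/f(Z)$, and $\det(D,m)$ can be as small as $b\gcd(p,k)$. That step can be finer than what any individual split factor controls. What is needed is that $f_{p,q}(v^{-2b\gcd(p,k)}Z)/f_{p,q}(Z)$ has coefficients in $\pos$ (Proposition~\ref{prop:wall}).

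\emph{The paper gets this from a division statement.} Lemma~\ref{lem:schurdivision} gives $\frac{[\gcd(M,k)]_x}{[k]_x}s_\lambda(x^{k-1},\ldots,x^{1-k})\in\N[x^{\pm1}]$, via vanishing of $s_\lambda$ at the roots of $[k]_x/[\gcd(M,k)]_x$ and $\mathfrak{sl}_2$-unimodality. It is applied with $M=np$ and $\gcd(np,k)\mid n\gcd(p,k)$.

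\emph{Your substitute cannot produce this division.} Pairing a Schur-positive function with products of $h_\mu$ returns its monomial coefficients, i.e.\ the colored invariants themselves, which are nonnegative anyway.

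\emph{Schur positivity has no stated source.} Schur positivity in $Y$ does not follow from an $S_k$-action permuting colors, which only gives symmetry. The paper needs an $S_{np}$-representation whose Young invariants are the colored BPS polynomials. It builds this from ordered eigenvalues and the Springer action on the star with $np$ labeled one-dimensional sources. It then carries it to BPS cohomology through an equivariant integrality/PBW isomorphism, with a sign twist cancelling the Euler-form sign.

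\textbf{Your identification step is also misdirected.} No single ray of slope $c/b$ produces the recurrence. The consistent completion has mixed rays $f_{p,q}$ of various slopes (infinitely many once $bc\geq4$). The case split $cAq\leq bNp$ versus $cAq\geq bNp$ depends on the label $(A,N)$, not on a wall. The divisibility conditions are not attached to the line $cAq=bNp$ either. They are row and column divisibilities by $q$-binomial polynomials, equivalent to Laurentness in $\mathcal T_{01}$ and $\mathcal T_{23}$.

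The paper never verifies the recurrence directly. Its argument runs as follows.
\begin{enumerate}
\item Once every wall action on $\Z^2$ is positive, the broken-line theta function pointed at $(-A,-N)$ has coefficients in $\pos$.
\item Its specialization at $v=1$ is the classical theta function, which equals the classical greedy element \cite{CGMMRSW}.
\item A positive Laurent polynomial vanishing at $v=1$ is zero. Hence the quantum support equals the classical greedy support.
\item Transporting across the incoming walls $A_0,B_0$ and repeating this argument gives finite Laurent expansions in the adjacent clusters.
\item The support-and-divisibility uniqueness \cite[Proposition~3.7]{LLRZ} (Lemma~\ref{lem:recognition}) then identifies the theta function with $v^sG_{bc}(A,N)$.
\end{enumerate}
Without the fine-step wall positivity and a recognition argument of this kind, your plan does not yield the theorem.
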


By \cite[Theorem~1.9(d)]{LLRZ}, these elements are also indecomposable:
none is a sum of two nonzero universally positive elements.
This proves \cite[Conjecture~1.10]{LLRZ}.

\subsection{Outline of the proof}
The classical theta basis agrees with the classical greedy basis
\cite{CGMMRSW}. For $c(v)\in\pos$, the implication
$c(1)=0\Rightarrow c(v)=0$ recovers the exact classical support from
coefficientwise specialization. Applied in three adjacent charts, it
gives quantum Laurentness and hence the greedy divisibility conditions.
The uniqueness theorem \cite[Proposition~3.7]{LLRZ} identifies the lift.

The other difficulty is integrality on the original lattice. Assume
$c=kb$. Splitting one incoming quantum dilogarithm into $k$ factors gives
an integral star-quiver model for the wall coefficients. For the ray
$(-bp,cq)$, with $\gcd(p,q)=1$, the necessary quantum difference has
step $b\gcd(p,k)$, which can be smaller than the step visible in an
individual split factor.

Source-color Schur positivity supplies the required division. Here it
means expansion in Schur polynomials in the source colors with
coefficients in $\N[t^{\pm1}]$, where $t$ records cohomological degree.
Its contribution is the positive quotient
\[
 \frac{[\gcd(M,k)]_x}{[k]_x}
 s_\lambda(x^{k-1},x^{k-3},\ldots,x^{1-k})
 \in\N[x^{\pm1}],\qquad |\lambda|=M.
\]
For a multiple $n(p,q)$, take $M=np$ and use
$\gcd(np,k)\mid n\gcd(p,k)$ to obtain the smaller difference.
We construct the symmetric-group representations underlying this Schur
expansion on a smooth proper family of framed moduli. Stabilization
recovers the semistable stack, and equivariant cohomological integrality
extracts its primitive contribution, retaining the grading signs.
The resulting wall actions enter the formal scattering construction
of \cite{DM} on $\Z^2$ and produce the positive lift used above.

\medskip
\noindent\textbf{Use of generative AI.}
The main results of this paper were generated using GPT-6-Astra and the
Danus system. The author supplied relevant references, checked the
manuscript, and revised its exposition. Related work may have been
overlooked in this AI-assisted process; the author welcomes comments
and additional references.

\section{Schur specialization and positive quantum differences}
Let $Y=(Y_1,\ldots,Y_k)$ be commuting color variables. For
$a\in\N^k$, write $|a|=\sum_i a_i$ and $Y^a=\prod_iY_i^{a_i}$.
For a partition $\lambda$ with at most $k$ parts, $s_\lambda(Y)$
denotes the character of the polynomial representation $S_\lambda(\C^k)$;
equivalently,
\[
 s_\lambda(Y)=
 \frac{\det(Y_i^{\lambda_j+k-j})_{i,j=1}^k}
      {\det(Y_i^{k-j})_{i,j=1}^k}.
\]
The quotient is a symmetric polynomial of degree $|\lambda|$.
A color polynomial is Schur-positive over $\N[t^{\pm1}]$ if its
coefficients in this basis belong to that semiring. The variable $t$
records a grading; it is independent of $Y$ until a specialization is
explicitly made.

The alphabet $(x^{k-1},x^{k-3},\ldots,x^{1-k})$ is the weight alphabet
of the $k$-dimensional irreducible $\mathfrak{sl}_2$-module. It allows
us to combine divisibility with symmetry and unimodality of characters.
For instance, $s_{(1)}$ specializes to $[k]_x$, so the quotient below
equals one when $M=1$.

\begin{lemma}\label{lem:schurdivision}
Let $\lambda$ be a partition of $M>0$ with at most $k$ parts, and let
$d=\gcd(M,k)$. Then
\[
 \frac{[d]_x}{[k]_x}
 s_\lambda(x^{k-1},x^{k-3},\ldots,x^{1-k})
 \in\N[x^{\pm1}].
\]
\end{lemma}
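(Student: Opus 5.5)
The plan has two parts. First, show that the quotient lies in $\Z[x^{\pm1}]$ by a cyclotomic-root argument. Second, establish positivity by a combinatorial or representation-theoretic refinement. Put $m=k/d$ and $S(x)=s_\lambda(x^{k-1},x^{k-3},\ldots,x^{1-k})$. Then
\[
 \frac{[k]_x}{[d]_x}=[m]_{x^d}=x^{-d(m-1)}\prod_{e\mid k,\ e\nmid d}\Phi_e(x^2),
\]
and this factor is squarefree.

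\textbf{Integrality.} The key claim is that $S$ vanishes at every root of $\Phi_e(x^2)$ with $e\mid k$ and $e\nmid d$. For such $e$ we have $e\nmid M$, since $e\mid M$ together with $e\mid k$ would force $e\mid d$. Let $x_0$ be a root, so that $\zeta=x_0^2$ is a primitive $e$-th root of unity.

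The specialized alphabet $\{x_0^{k-1-2j}\}_{j=0}^{k-1}$ equals $x_0^{k-1}\{1,\zeta^{-1},\ldots,\zeta^{-(k-1)}\}$. Because $e\mid k$, this multiset is stable under multiplication by $\zeta$. Since $s_\lambda$ is symmetric and homogeneous of degree $M$, this gives $S(x_0)=\zeta^M S(x_0)$. As $\zeta^M\neq1$, we conclude $S(x_0)=0$.

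Because $[m]_{x^d}$ is a unit multiple of a monic polynomial with simple roots, Gauss's lemma then gives $S/[m]_{x^d}\in\Z[x^{\pm1}]$. Multiplying by $[d]_x$ preserves integrality.

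\textbf{Positivity.} Write $f=S/[m]_{x^d}$. It suffices to show $f\in\N[x^{\pm1}]$, because $[d]_x$ has nonnegative coefficients. We know $S$ is palindromic and unimodal, being an $\mathfrak{sl}_2$-character, and hence so is $f$; but these properties alone do not force the quotient to be positive. The route I would try first is a free cyclic action.

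By the hook-content formula, $S$ is, up to a power of $x$, the principal specialization $\sum_{T}q^{\,\mathrm{wt}(T)}$ over semistandard tableaux $T$ of shape $\lambda$ with entries in $\{1,\ldots,k\}$, where $q=x^2$. The cyclic sieving phenomenon for this set (Rhoades-type: promotion, or the $\Z/k$ action coming from the Coxeter element on $S_\lambda(\C^k)$) tells us that $S|_{q=\omega}$ counts fixed points of the corresponding power of the generator. The roots-of-unity computation above says that elements of order $e$ with $e\nmid d$ have no fixed points. So the subgroup $\Z/m\subset\Z/k$ acts freely.

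I would then refine this to a weight-compatible statement. Concretely, I aim for a decomposition of the set of tableaux into $\Z/m$-orbits, each of whose weight generating function is $q^{a}(1+q^d+\cdots+q^{d(m-1)})$. Summing over orbit representatives then exhibits $f$ directly with nonnegative coefficients.

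An alternative is representation-theoretic. Decompose $S_\lambda(\C^k)$ under the principal $\mathfrak{sl}_2$ together with the regular element $\exp(2\pi i\rho^\vee/k)$. Freeness of the $\mu_m$-action on weight strings then shows that each graded piece splits into copies of $\C[\mu_m]$, spread across $m$ consecutive weights of step $2d$.

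\textbf{Main obstacle.} The integrality step is routine. The hard part is upgrading fixed-point-freeness at roots of unity to an honest coefficientwise factorization, that is, producing a weight-homogeneous free $\Z/m$-structure rather than just vanishing of character values. If the combinatorial promotion action resists this, the fallback is to filter $S_\lambda(\C^k)$ by the grading, so that $x^d$ acts through the regular element. The goal is then to show that the associated graded module is free over $\C[z]/(z^m-1)$ with $\deg z=2d$; this can be checked on a Gelfand--Tsetlin basis, where the regular element acts by a cyclic shift of patterns.
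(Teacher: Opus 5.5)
Your integrality step is correct and is essentially the paper's argument: $S$ vanishes at the roots of $[k]_x/[d]_x$ because the principal alphabet is stable under $\zeta=x_0^2$, those roots are simple, and you divide by a monic integer polynomial. There is one bookkeeping slip: $S/[m]_{x^d}$ already equals $\frac{[d]_x}{[k]_x}S$, so nothing is to be multiplied by $[d]_x$, and your target $f\in\N[x^{\pm1}]$ is precisely the lemma.

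\textbf{Gap: positivity.} You leave positivity as a program, and the tools you propose do not deliver it.
\begin{itemize}
\item Cyclic sieving only records values of $S$ at roots of unity, which is the vanishing you already used. It says nothing about how weights are distributed within an orbit: promotion rotates the content vector, so it shifts the weight by a constant only modulo $k$.
\item Your inference that $\Z/m\subset\Z/k$ acts freely is wrong. For $k=4$ and $\lambda=(2)$ one has $d=m=2$ and $S(i)=-2\neq0$, so the element of order two has fixed points. Only the weaker statement follows: every orbit has size divisible by $m$.
\item The representation-theoretic variant fails as stated. The element $\exp(2\pi i\rho^\vee/k)$ lies in the torus of the principal $\mathfrak{sl}_2$. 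It acts by a scalar on each weight space and diagonally on a Gelfand--Tsetlin basis, so it can neither spread a copy of $\C[\mu_m]$ over $m$ weights nor cyclically shift patterns.
\end{itemize}

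\textbf{The missing step.} Once divisibility is known, the symmetry and unimodality of $S$ do force positivity, contrary to your remark. Write $S_j=[x^j]S$. Complete reducibility of $S_\lambda(V)$ gives $S_j=S_{-j}$ and $S_j\ge S_{j-2}$ for $j\le0$. Since $f$ is a Laurent polynomial, it equals its expansion at $x=0$, where $\frac{[d]_x}{[k]_x}=\sum_{a\ge0}\bigl(x^{k-d+2ka}-x^{k+d+2ka}\bigr)$. Hence, for $j\le0$,
\[
 [x^j]f=\sum_{a\ge0}\bigl(S_{j-k+d-2ka}-S_{j-k-d-2ka}\bigr).
\]
This is a finite sum. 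In each term both indices are nonpositive and differ by $2d$, so iterating unimodality $d$ times makes every term nonnegative. The symmetry $f(x)=f(x^{-1})$ handles $j>0$.

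This is the paper's proof. The unimodality used is that of $S$, not of $f$. Integrality is exactly what lets the one-sided expansion compute the coefficients of $f$, and what lets symmetry carry the bound to positive degrees.
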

\begin{proof}
Set $S(x)=s_\lambda(x^{k-1},\ldots,x^{1-k})$. This is the character of
the Schur module $S_\lambda(V)$, where $V$ is the $k$-dimensional
irreducible $\mathfrak{sl}_2$-module. Complete reducibility gives
\begin{equation}\label{eq:unimodal}
 S_j=S_{-j},\qquad S_j\geq S_{j-2}\quad(j\leq0),
 \qquad S_j=[x^j]S(x).
\end{equation}
All weights have parity $M(k-1)$.

Consider a root $\zeta$ of
$D(x)=[k]_x/[d]_x$ and let $h$ be the order of $\zeta^2$.
Then $h\mid k$ and $h\nmid d$, so $h\nmid M$.
Multiplication by $\zeta^{-2}$ permutes the principal alphabet.
Symmetry and homogeneity therefore give
$S(\zeta)=\zeta^{-2M}S(\zeta)$, whence $S(\zeta)=0$.
All roots of $D$ are simple. After multiplying by monomials, $D$ is
monic with integer coefficients. Thus $Q=S/D$ is an integral Laurent
polynomial.

The quotient is symmetric. Expanding at zero gives, for $j\leq0$,
\[
 [x^j]Q
 =\sum_{a\geq0}
  \bigl(S_{j-k+d-2ka}-S_{j-k-d-2ka}\bigr)\geq0.
\]
The sum is finite, and both indices lie on the nonpositive side.
Equation~\eqref{eq:unimodal} applies by iteration in steps of two.
Symmetry handles $j>0$.
\end{proof}

Define the formal quantum dilogarithm by
\begin{equation}\label{eq:psi}
 \Psi_s(Z)=\exp\left(
 \sum_{h\geq1}\frac{(-1)^{h-1}Z^h}{h(s^h-s^{-h})}\right).
\end{equation}
The exponential is taken in $\Q(s)[[Z]]$, where $Z$ has positive
formal degree. In particular, $\Psi_s(s^2Z)/\Psi_s(Z)=1+sZ$.
The following result converts a graded Schur expansion into a positive
shifted quotient of quantum dilogarithms, with the parity of the grading determining whether its Euler
factors are polynomials or geometric series.

\begin{proposition}\label{prop:difference}
Fix positive integers $p,k$ and put $g=\gcd(p,k)$.
Index the $k$ colors here by $0\leq i<k$. For every $n\geq1$, let
\[
 \Omega_n(t;Y)=\sum_{|a|=np,j}\beta_{n,a,j}t^jY^a
 =\sum_{\lambda\vdash np}c_{n,\lambda}(t)s_\lambda(Y),
 \qquad c_{n,\lambda}(t)\in\N[t^{\pm1}].
\]
Suppose that all nonzero $t$-degrees of $\Omega_n$ have the same parity
$\epsilon_n$. Define
\[
 f(Z)=\prod_{n,a,j}
 \Psi_{x^k}\left((-1)^j x^{kj+\sum_i(k-1-2i)a_i}Z^n\right)
 ^{(-1)^j\beta_{n,a,j}}.
\]
Then $f(x^{2g}Z)/f(Z)\in1+Z\N[x^{\pm1}][[Z]]$.
\end{proposition}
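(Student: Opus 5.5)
The plan is to compare logarithms: take $\log$ of both $f(x^{2g}Z)$ and $f(Z)$ in $\Q(x)[[Z]]$, subtract, and recognize the difference as the logarithm of a product of Euler factors with positive exponents. For one factor $\Psi_{x^k}(uZ^n)^{e}$, formula~\eqref{eq:psi} gives
\[
 \log\frac{\Psi_{x^k}(ux^{2gn}Z^n)}{\Psi_{x^k}(uZ^n)}
 =\sum_{h\geq1}\frac{(-1)^{h-1}u^hZ^{nh}}{h}\cdot
 \frac{x^{2gnh}-1}{x^{kh}-x^{-kh}}
 =\sum_{h\geq1}\frac{(-1)^{h-1}u^hZ^{nh}}{h}\,x^{gnh}\frac{[gn]_{x^h}}{[k]_{x^h}}.
\]
I would insert $u=(-1)^jx^{kj+\sum_i(k-1-2i)a_i}$ and $e=(-1)^j\beta_{n,a,j}$. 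Then, for fixed $n,j,h$, I would sum over $a$. This sum is $\sum_a\beta_{n,a,j}x^{h\sum_i(k-1-2i)a_i}$, which is the $t^j$-coefficient of $\Omega_n$ evaluated at the principal alphabet $Y_i=x^{h(k-1-2i)}$. By the Schur expansion it equals $\sum_{\lambda}[t^j]c_{n,\lambda}(t)\,s_\lambda(y^{k-1},\ldots,y^{1-k})$ with $y=x^h$. Partitions with more than $k$ parts contribute $0$.

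The key positivity step is to set $d=\gcd(np,k)$ and use $d\mid n\gcd(p,k)=gn$. This gives the factorization
\[
 \frac{[gn]_y}{[k]_y}s_\lambda(y^{k-1},\ldots,y^{1-k})
 =\frac{[gn]_y}{[d]_y}\cdot\frac{[d]_y}{[k]_y}s_\lambda(y^{k-1},\ldots,y^{1-k}).
\]
The first factor lies in $\N[y^{\pm1}]$ because $d\mid gn$. The second lies in $\N[y^{\pm1}]$ by Lemma~\ref{lem:schurdivision}, applied with $M=np$. So I would define
\[
 R_{n,j}(x)=x^{kj+gn}\sum_\lambda[t^j]c_{n,\lambda}(t)\,\frac{[gn]_x}{[k]_x}s_\lambda(x^{k-1},\ldots,x^{1-k})\in\N[x^{\pm1}].
\]
With this notation, the log-ratio becomes
\[
 \sum_{n,j,h}\sigma_{j,h}\,\frac{R_{n,j}(x^h)Z^{nh}}{h},\qquad \sigma_{j,h}=(-1)^j(-1)^{jh}(-1)^{h-1}.
\]

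The remaining step, and the one needing care, is the sign bookkeeping, which is where the parity hypothesis enters. If $j$ is even, then $\sigma_{j,h}=(-1)^{h-1}$. Writing $R_{n,j}=\sum_e r_ex^e$ with $r_e\ge0$, the $h$-sum is $\log\prod_e(1+x^eZ^n)^{r_e}$, which is a product of polynomials with positive coefficients. If $j$ is odd, then $\sigma_{j,h}=(-1)^{h+1}(-1)^{h-1}=1$. The $h$-sum is then $-\log\prod_e(1-x^eZ^n)^{r_e}$, a product of geometric series $\sum_{m\ge0}x^{em}Z^{nm}$, again with positive coefficients.

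Since all nonzero $t$-degrees of $\Omega_n$ share the parity $\epsilon_n$, for each $n$ only one of these two cases occurs. I would still treat the $j$'s separately, because the exponential of a sum is the product of the exponentials. Hence $f(x^{2g}Z)/f(Z)$ is a (formally convergent, since $n\ge1$) product of factors in $1+Z\N[x^{\pm1}][[Z]]$, and so it lies in that set.

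The points I would double-check are the identity $(x^{2gnh}-1)/(x^{kh}-x^{-kh})=x^{gnh}[gn]_{x^h}/[k]_{x^h}$ and that the divisibility $d\mid gn$ is exactly what allows $[k]$ to be cancelled using Lemma~\ref{lem:schurdivision}. The main conceptual obstacle is precisely this cancellation, which the lemma already resolves.
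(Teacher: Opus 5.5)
Your proposal is correct and follows essentially the paper's argument. The shared steps are the logarithm of the shifted quotient, the identity producing $[gn]_{x^h}/[k]_{x^h}$, principal specialization of the Schur expansion, Lemma~\ref{lem:schurdivision} combined with $\gcd(np,k)\mid gn$, and Euler factors of the two sign types. The differences are bookkeeping only: the paper works with the centered quotient $f(x^{g}Z)/f(x^{-g}Z)$ and uses the common parity $\epsilon_n$ to collect all $t$-degrees into a single positive $P_n(x)$; your separate treatment of each $j$ with the uncentered quotient shows that the parity hypothesis is not actually needed for this positivity statement.
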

\begin{proof}
Put $d_n=\gcd(np,k)$. Since $\gcd(p/g,k/g)=1$,
\[
 d_n=g\gcd(np/g,k/g)=g\gcd(n,k/g)\mid ng.
\]
Lemma~\ref{lem:schurdivision} shows that
\begin{equation}\label{eq:primitive}
 P_n(x)=\frac{[ng]_x}{[k]_x}
 \Omega_n(x^k;x^{k-1},\ldots,x^{1-k})
 =\sum_d m_{n,d}x^d
 \in\N[x^{\pm1}].
\end{equation}
Indeed, for each Schur summand one first divides by $[k]_x/[d_n]_x$
and then multiplies by the positive polynomial $[ng]_x/[d_n]_x$.

Taking logarithms in the $Z$-adic completion yields
\[
 \log\frac{f(x^gZ)}{f(x^{-g}Z)}
 =\sum_{n,h\geq1}
 \frac{(-1)^{(h-1)(\epsilon_n+1)}}{h}
 P_n(x^h)Z^{nh}.
\]
Consequently the centered quotient is a product of factors
\begin{equation}\label{eq:euler}
 \begin{cases}
 (1+x^dZ^n)^{m_{n,d}},&\epsilon_n=0,\\
 (1-x^dZ^n)^{-m_{n,d}},&\epsilon_n=1.
 \end{cases}
\end{equation}
Every coefficient is positive and finite. Replacing $Z$ by $x^gZ$
proves the assertion.
\end{proof}

\section{Source colors and ordered eigenvalues}
A color is a source vertex, and its multiplicity is the dimension
assigned to that vertex. The ordered-eigenvalue construction identifies
cohomology for source dimensions $a_1,\ldots,a_k$ with
$S_{a_1}\times\cdots\times S_{a_k}$-invariants in a representation
associated with $P$ labeled one-dimensional sources.

All varieties and stacks in this section are over $\C$; cohomology has
rational coefficients. For a quotient stack $[X/G]$, $H^j([X/G])$
means equivariant cohomology $H_G^j(X;\Q)$. For a smooth variety of
dimension $D$ we use the centered character $\sum_j\dim H^j t^{j-D}$;
for a smooth quotient stack the same convention uses its stack
dimension, which can be negative. These shifts will determine the
parity in the wall factors.

Fix positive integers $b,P,Q$. For a composition
$a=(a_1,\ldots,a_k)$ of $P$, allowing zero parts, let $\mathcal Q_a$
be the star with $k$ sources, one sink, and $b$ arrows from each source
to the sink. The dimensions are $(a_1,\ldots,a_k,Q)$. We use the semistability condition
\begin{equation}\label{eq:stability}
 Q\sum_i\dim U_i'-P\dim W'\leq0
\end{equation}
for every subrepresentation. Explicitly, a representation consists of
spaces $U_i=\C^{a_i}$ and $W=\C^Q$ with $b$ maps $U_i\to W$ for
each $i$; a subrepresentation is a choice of subspaces $U_i',W'$
preserved by all these maps. Stability requires strict inequality for every nonzero proper
subrepresentation. The acting change-of-basis group is
$\prod_i\GL(U_i)\times\GL(W)$.
Write $\mathfrak S_a$ for the quotient of the semistable locus by this
group, retaining the common scalar stabilizer, and put
\[
 \delta_a=bPQ-Q^2-\sum_i a_i^2,
 \qquad \delta_{\mathrm{fl}}=bPQ-Q^2-P.
\]

\subsection{Framed approximation}
We approximate the stack by projective moduli with additional framing
vectors. Increasing the number of vectors raises the codimension of
unstable framings and recovers each fixed cohomological degree.

Add a vertex of dimension one and $f$ arrows from it to each source.
For a rational number $0<\varepsilon<1$, assign weights
\[
 \theta_i=Q-\varepsilon/P,\qquad
 \theta_{\mathrm{sink}}=-P,\qquad
 \theta_{\mathrm{frame}}=\varepsilon.
\]
Let $M_a^f$ be the stable moduli space. A subrepresentation with total
source dimension $A$, sink dimension $B$ and frame dimension
$\eta\in\{0,1\}$ has weight
\begin{equation}\label{eq:perturbation}
 QA-PB+\varepsilon(\eta-A/P).
\end{equation}
We use the convention that subrepresentations have nonpositive weight.
To compare with \cite[Definition~2.19]{Hoskins}, clear denominators and
replace $\theta$ by $-\theta$.
Vanishing of \eqref{eq:perturbation} forces $(A,B,\eta)$ to be the zero
or full dimension vector. Thus semistability equals stability. Equivalently, the underlying
representation is semistable and no proper equal-slope subrepresentation
contains every framing vector. Standard quiver GIT \cite[\S2.2]{Hoskins}
therefore gives a smooth projective variety, possibly empty, of dimension
\begin{equation}\label{eq:frameddim}
 D_a^f=fP+bPQ-Q^2-\sum_i a_i^2.
\end{equation}

\begin{proposition}\label{prop:framed}
There is a finite graded $\Q[S_P]$-module $R^f$, independent of the
number of colors, such that
\begin{equation}\label{eq:framedyoung}
 \sum_j\dim H^j(M_a^f)t^{j-D_a^f}
 =\sum_d\dim(R_d^f)^{S_{a_1}\times\cdots\times S_{a_k}}t^d.
\end{equation}
\end{proposition}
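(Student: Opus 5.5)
We will follow the ordered-eigenvalue (splitting) construction. Put $\mathbf 1=(1,\ldots,1)\in\N^P$ and consider the star quiver $\mathcal Q_{\mathbf 1}$ with $P$ one-dimensional sources and the same framing. We will compare $M_a^f$ with a moduli space $\widetilde M^f$ of \emph{flagged} representations. Such a point is a representation in $M_a^f$ together with a complete flag in each $U_i$; concretely, this is the bundle $\widetilde M_a^f=M_a^f\times_{\prod\GL(U_i)}\prod_i \GL(U_i)/B_i$ of complete flags in the tautological bundles $\mathcal U_i$.

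\textbf{Step 1: flag bundles.} Since $\widetilde M_a^f\to M_a^f$ is a flag bundle, Leray--Hirsch gives
\[
H^*(\widetilde M_a^f)\cong H^*(M_a^f)\otimes\bigotimes_i H^*(\mathrm{Fl}_{a_i}).
\]
The Weyl group $S_{a_1}\times\cdots\times S_{a_k}$ acts on the fibre cohomology by the regular representation in total. Its invariants are concentrated in degree $0$, and they recover $H^*(M_a^f)$. The action on $H^*(\widetilde M_a^f)$ is defined through the identification $H^*(\mathrm{Fl}_a)\cong H^*(BT)/(\text{sym})$, so invariants may be taken over $H^*(M_a^f)$.

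\textbf{Step 2: a colour-independent module.} The goal is to identify $H^*(\widetilde M_a^f)$, together with its $T$-equivariant structure, with a space independent of how the $P$ source lines are grouped into colours. Here is the idea. A flagged representation of $\mathcal Q_a$ with a complete flag of $U_i$ should be compared with a representation of $\mathcal Q_{\mathbf 1}$ with $P$ line sources; the associated graded of the flags gives those lines. We will do this via torus localisation and Kirwan surjectivity. Both $H^*(M_a^f)$ and $H^*(M_{\mathbf 1}^f)$ are quotients of equivariant cohomology of the representation space by Kirwan kernels. The key fact is that the kernel is generated by classes attached to the unstable strata of the perturbed stability~\eqref{eq:perturbation}, and these are determined by total dimension data. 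We would therefore define
\[
R^f:=H^*_{T_P\times\GL(W)}\bigl(\mathrm{Rep}^{ss}(\mathcal Q_{\mathbf 1}^f)\bigr)
\]
with grading centred at $D_{\mathbf 1}^f$. $S_P$ acts by permuting the identical line sources, which is a symmetry of $\mathcal Q_{\mathbf 1}$ preserving the stability weights, since all $\theta_i$ are equal.

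The identity \eqref{eq:framedyoung} then amounts to two facts. First, $M_a^f=M_{\mathbf 1}^f/\!/\!$ (parabolic) with $H^*(M_a^f)\cong H^*(M^f_{\mathbf 1})^{S_{a_1}\times\cdots\times S_{a_k}}$. This follows from the standard abelianisation (Martin's theorem) relating the $\prod\GL(a_i)$-quotient to its maximal-torus quotient: cohomology of the nonabelian quotient equals Weyl invariants of the abelian quotient, modulo the Euler class of roots. Second, the degree shift: $D_a^f-D_{\mathbf 1}^f=P-\sum a_i^2$, which is exactly minus twice the number of positive roots plus the rank difference. Martin's formula carries exactly this shift, and Weyl anti-invariance of the root Euler class turns invariants into the correct sign representation. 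This is why we twist $R^f$ by the Euler class $\prod_{r<s}(\alpha_r-\alpha_s)$ of the root bundle, which makes the invariants come out plainly.

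\textbf{Main obstacle.} Martin's theorem requires stable $=$ semistable for the torus quotient as well. With the $\varepsilon$-perturbed weights all equal on sources, equal-slope subobjects for the torus can still occur. We would first try a further generic perturbation of the source weights $\theta_i$ by distinct small amounts, then show that the cohomology is independent of this perturbation by wall-crossing invariance inside the chamber. If that fails, we would replace Martin's theorem by the Kirwan-surjectivity presentation, together with the observation that the $S_P$-action on the Kirwan ideal is compatible with restriction to $S_{a_1}\times\cdots\times S_{a_k}$. Finiteness of $R^f$ follows from the projectivity of $M_{\mathbf 1}^f$.
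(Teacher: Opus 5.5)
There is a genuine gap. The identity you reduce to, $H^*(M^f_a)\cong H^*(M^f_{\mathbf 1})^{S_{a_1}\times\cdots\times S_{a_k}}$ for the geometric permutation action, is false. It is also not what Martin's theorem says. Where that theorem applies, it gives a degree-preserving isomorphism with $H^*(M^f_{\mathbf 1})^W/\operatorname{ann}(e)$, which is usually a proper quotient of the invariants. Moreover $D^f_{\mathbf 1}-D^f_a=2g_a$ with $g_a=\sum_i a_i(a_i-1)/2$, so no degree-preserving identification can match the centered gradings in \eqref{eq:framedyoung} when $g_a>0$.

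Concretely, take $b=2$, $P=2$, $Q=1$. Then $M^f_{(2)}\cong\mathbb P^{2f-1}$, since the arrow map $U\to W\otimes\C^2$ must be an isomorphism and the framing nonzero. By contrast, $M^f_{(1,1)}$ is a $\mathbb P^{2f-1}$-bundle over $\mathbb P^1\times\mathbb P^1$, with $S_2$-invariant tautological class $\xi$ and $S_2$ swapping the base factors. Its $S_2$-invariants have dimension $6f$, not $2f$. It is the sign-isotypic part $(h_1-h_2)\cdot\langle 1,\xi,\ldots,\xi^{2f-1}\rangle$ that reproduces $H^*(\mathbb P^{2f-1})$, shifted up by $2=2g_{(2)}$. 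Twisting by the Euler class $\prod_{r<s}(\alpha_r-\alpha_s)$ does not repair this. The roots of $\prod_i\GL_{a_i}$ depend on $a$, so this is not a twist of one $S_P$-module serving all compositions, and multiplying by a cohomology class does not change a representation.

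The missing idea is the global sign twist $R^f=H^*(M^f_{\mathbf 1})\otimes\sgn_{S_P}$, centered at $D^f_{\mathbf 1}$. Since $\sgn_{S_P}$ restricts to the product of block signs for every composition, what must be proved is $\Hom_{\prod_i S_{a_i}}(\sgn,H^j(M^f_{\mathbf 1}))\cong H^{j-2g_a}(M^f_a)$. Your Step 1 shows where the shift lives: the block sign sits in top degree $2g_a$ of the flag fibre. But up to an affine bundle, $\widetilde M^f_a$ is only an open subset of $M^f_{\mathbf 1}$. The real content is that restriction to this open subset is an isomorphism on the block-sign part.

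The paper gets this from Springer theory. It merges the sources, adds a loop $L$, and shows the ordered-eigenvalue family $h_f$ is smooth and proper. Its cohomology sheaves are therefore constant, with common fibre $H^*(M^f_{\mathbf 1})$. It then uses that the sign summand of the Springer sheaf is supported where $L$ is scalar on each generalized eigenspace, which is exactly $M^f_a$.

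Your abelianization route can be repaired instead:
\begin{itemize}
\item By $W$-equivariant Kirwan surjectivity for the torus quotient, anti-invariants in $H^*(M^f_{\mathbf 1})$ are $e_+$ times invariants, where $e_+=\prod_{\alpha>0}e(L_\alpha)$.
\item By Poincar\'e duality on $M^f_{\mathbf 1}$, pairing against such anti-invariants, the kernel of $e_+$ on invariants is their intersection with $\operatorname{ann}(e)$.
\item Hence $e_+$ maps Martin's quotient isomorphically onto the sign-isotypic part, raising degree by $2g_a$.
\end{itemize}
You would still need to justify Martin's theorem in this affine GIT setting.

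Finally, your ``main obstacle'' does not occur. In \eqref{eq:perturbation}, $QA-PB\in\Z$ and $|\varepsilon(\eta-A/P)|<1$, so strictly semistable objects are excluded for every grouping of the sources, including $\mathbf 1$.
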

\begin{proof}
Merge the sources into $U=\C^P$, let $W=\C^Q$, and add an
endomorphism $L$ of $U$. Retain the $b$ maps $U\to W$ and the $f$
framing vectors in $U$, with stability~\eqref{eq:perturbation}
for subrepresentations whose source subspaces are $L$-invariant.
The affine invariant quotient is $\Sym^P\A^1$: scaling at the sink
eliminates outgoing-arrow coordinates from invariants, scaling at the
frame eliminates the framing coordinates, and the remaining invariants
are the characteristic coefficients of $L$. Its stable moduli space
$M^f$ is consequently proper over $\Sym^P\A^1$.

Adjoin a complete $L$-invariant flag in $U$. The resulting space $F^f$
has the ordered-eigenvalue map $h_f$. Let $r_f$ forget the flag,
and let $c_f$ record the characteristic polynomial of $L$. Then
\begin{equation}\label{eq:eigenvalue-square}
\begin{CD}
 F^f @>{r_f}>> M^f\\
 @V{h_f}VV @VV{c_f}V\\
 \A^P @>{\pi}>> \Sym^P\A^1
\end{CD}
\end{equation}
commutes. An ordered list of eigenvalues determines a unique invariant
flag over the distinct-eigenvalue locus; at collisions $r_f$ has flag
fibers.

The map $h_f$ is proper: the forgetful map to $M^f$ is proper, and the graph of
$h_f$ is closed in the base change by the finite map
$\pi:\A^P\to\Sym^P\A^1$. It is smooth as well. In a flag basis,
the parameter space is an open subset of upper triangular matrices,
$b$ arbitrary arrow matrices and $f$ vectors; projection to the diagonal
entries is smooth. The group
$(B_P\times\GL_Q\times\GL_1)/\Gm$ acts freely, and smoothness
descends along its quotient torsor. The relative dimension is
\[
 D^f=fP+bPQ-Q^2-P.
\]
The local systems $E_f^j=R^jh_{f,*}\Q$ are therefore constant on
$\A^P$. If the family is empty, both sides of~\eqref{eq:framedyoung} vanish.

Over pairwise distinct ordered eigenvalues, the fiber is the framed star
with $P$ labeled one-dimensional sources. Reordering the eigenvalues
permutes those labels. These maps extend across the diagonals because
$E_f^j$ is constant and the configuration space is connected. They give
an $S_P$-representation on its common fiber.

We use the Betti Grothendieck--Springer action with the normalization
in which the top cohomology of the complete flag variety is the sign
representation. If $\rho_d$ is the Springer resolution of the nilpotent
cone and $i_0$ is the inclusion of its zero endomorphism, the sign summand is
\[
 \Hom_{S_d}(\sgn,R\rho_{d,*}\Q)
       \cong i_{0,*}\Q[-d(d-1)].
\]
At eigenvalue collisions the action restricts to the product of the
block actions. These are the block decomposition and sign-support
identities of \cite[\S3, Corollaries~3.4, 3.7 and the proof of
Proposition~3.8]{FYZ},
stated there for $\ell$-adic sheaves; here we use their Betti form,
given by the same flag correspondences over $\C$.
To specify the base change, put $\mathfrak g=\operatorname{End}(\C^P)$
and let $\widetilde{\mathfrak g}$ parameterize pairs consisting of an
endomorphism and an invariant complete flag. Let $F_{\mathrm{fr}}$ be
the framing line. The common scalar acts trivially on
$U\otimes F_{\mathrm{fr}}^\vee$, so this bundle and its endomorphism
$L$ descend along the free effective quotient. They define $\ell_f$;
their relative space of complete $L$-invariant flags is $F^f$.
Thus we have the Cartesian square of stacks
\[
\begin{CD}
 F^f @>>> [\widetilde{\mathfrak g}/\GL_P]\\
 @V{r_f}VV @VV{\rho}V\\
 M^f @>{\ell_f}>> [\mathfrak g/\GL_P].
\end{CD}
\]
The stability condition is imposed on the framed loop-quiver representation.
Since $\rho$ is proper, proper base change gives
$Rr_{f,*}\Q\cong\ell_f^*R\rho_*\Q$, with the pulled-back Springer
action. The block and sign identities can therefore be applied on each
fixed-eigenvalue fiber with its arrow and framing parameters.

The comparison with the action on $E_f^j$ takes place on
\[
 R^j(c_fr_f)_*\Q=\pi_*E_f^j.
\]
Since $E_f^j$ is constant and $\pi$ is finite,
$\pi_*E_f^j[P]$ is the intermediate extension of its restriction to
the unordered distinct-eigenvalue locus. Its endomorphisms are determined by this restriction.
The action induced by relabeling eigenvalues and the pushed-forward
Springer action agree there, hence on $\pi_*E_f^j[P]$. At a tuple of
repeated eigenvalues, restriction to its stabilizer gives the block
action used in the sign-summand calculation.

At an eigenvalue tuple with multiplicities $a_i$, let
$g_a=\sum_i a_i(a_i-1)/2$. The block-sign summand of the flag direct
image is supported where each generalized eigenspace has scalar loop
endomorphism. This locus is $M_a^f$: the arrows and framing vectors
split into their source-color components and the stability inequalities
are unchanged. The local system from the top cohomology of the flag
fibers is trivial, since the groups $\GL_{a_i}$ are connected. Thus
\begin{equation}\label{eq:springeridentity}
 \Hom_{S_{a_1}\times\cdots\times S_{a_k}}
 \left(\sgn,E_f^j\right)
 \cong H^{j-2g_a}(M_a^f).
\end{equation}
Now $D^f-D_a^f=2g_a$. Set
$R^f_{j-D^f}=E_f^j\otimes\sgn_{S_P}$. The restriction of the global
sign is the product of the block signs, so
\eqref{eq:springeridentity} gives~\eqref{eq:framedyoung}.
\end{proof}

\subsection{The semistable stack}
To pass to the semistable stack, we make the preceding construction
compatible with the transition maps in $f$.

\begin{proposition}\label{prop:stack}
There is a graded $S_P$-representation $R^\infty$, bounded below and
finite in each degree, for which
\begin{equation}\label{eq:stackyoung}
 \sum_j\dim H^j(\mathfrak S_a)t^{j-\delta_a}
 =\sum_d\dim(R_d^\infty)^{S_{a_1}\times\cdots\times S_{a_k}}t^d.
\end{equation}
It is the global sign twist of the centered cohomology of the star with
$P$ labeled one-dimensional sources, with its geometric permutation action.
\end{proposition}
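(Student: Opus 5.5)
We pass to the limit $f\to\infty$ in Proposition~\ref{prop:framed}. The plan is to build compatible transition maps $M_a^f\to M_a^{f+1}$ and the corresponding maps of the families $F^f\to F^{f+1}$ over $\A^P$. Then we identify the limit of the cohomologies with $H^*(\mathfrak S_a)$ degree by degree, and check that the $S_P$-actions on the fibres $E_f^j$ are compatible, so that the limit representation $R^\infty$ exists.

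\textbf{Step 1: the stack as a limit.} The framed moduli $M_a^f$ is the quotient by $\prod_i\GL(U_i)\times\GL(W)$ of an open subset of the total space of $\Hom(\C^f,\bigoplus_i U_i)$ over the semistable locus. The open subset consists of the framings that generate no proper equal-slope subrepresentation. By the description after~\eqref{eq:perturbation}, it contains the framings whose vectors span each $U_i$. Its complement therefore has complex codimension growing linearly in $f$, roughly $f-\max_i a_i+1$. Hence the map $M_a^f\to\mathfrak S_a$ is a fibration, up to the scalar stabilizer which acts freely on framings, over a vector bundle of rank $fP$ minus a subset of high codimension. So $H^j(\mathfrak S_a)\to H^j(M_a^f)$ is an isomorphism for $j<2(f-P)$.

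Now compare the normalisations. $D_a^f=fP+\delta_a$, and the rank-$fP$ vector-bundle directions carry no cohomology. So the centered characters $\sum_j\dim H^j(M_a^f)t^{j-D_a^f}$ agree in degrees $d<2(f-P)-\delta_a-fP$. This bound is not increasing in $f$, so naive centering fails. To fix it, we use the centering $t^{j-\delta_a}$ for the stack and regard $R^f$ as shifted by $fP$ in degree. We then define $R^\infty_d=\varinjlim_f R^f_{d-fP}$, with the shift taken with respect to the cohomological degree $j$ rather than $j-D^f$. The stability statement is then that $E_f^j$ stabilises as $f\to\infty$ for $j<2(f-P)$. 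The stabilising map is pullback along the open embedding $F^f\times\C^P\supset F^{f+1}_{\text{open}}$ given by adding a zero framing vector. Equivalently, we use the vector bundle $F^{f+1}\supset V\to F^f$ whose complement has codimension $\geq f$.

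\textbf{Step 2: equivariance.} The transition maps commute with $h_f$, $r_f$, $c_f$ and with $\ell_f$. So the Springer action, obtained by proper base change from $[\widetilde{\mathfrak g}/\GL_P]$, is compatible with pullback, and the isomorphisms $E_f^j\cong E_{f+1}^j$ in the stable range are $S_P$-equivariant. The same compatibility applied to~\eqref{eq:springeridentity} gives~\eqref{eq:stackyoung} degreewise, with the shift $D^f-D_a^f=2g_a$ independent of $f$. Each degree is a stable value, so it is finite-dimensional. It is bounded below because $H^j$ vanishes for $j<0$, giving $d\geq-\delta_{\mathrm{fl}}$.

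\textbf{Step 3: the last sentence of the statement.} Take the fibre over distinct eigenvalues. In the limit this is the cohomology of the semistable stack of the star with $P$ one-dimensional labeled sources, with stack dimension $\delta_{\mathrm{fl}}$. The $S_P$ action is monodromy on a constant system, which is the relabeling action. Twisting by $\sgn$ matches the definition $R^f=E^j_f\otimes\sgn$.

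\textbf{Main obstacle.} I expect the main difficulty to be Step 1: producing honest transition maps between the projective varieties $M^f$ (rather than just open pieces), and the codimension estimate on the merged loop-quiver side uniformly over $\A^P$, including collision fibres. I would first try to work throughout with the open loci where the framing spans $U$, where everything is a free quotient and the comparison is a vector-bundle-minus-high-codimension argument.
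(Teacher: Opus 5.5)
Your proposal is correct and follows essentially the paper's route. You stabilize $H^j(M_a^f)$ and $E_f^j$ under appending a zero framing vector, using a codimension bound for bad framings that grows linearly in $f$; you check $S_P$-equivariance and compatibility with the sign-summand identification; and you center by $\delta_{\mathrm{fl}}$, your reindexed limit being exactly the paper's $R^\infty_{j-\delta_{\mathrm{fl}}}=E_\infty^j\otimes\sgn$.

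Your spanning-locus bound is a simpler substitute for the paper's Grassmannian bound $f-C_0$: it gives codimension at least $f-\max_i a_i+1$, or $f-P+1$ in the merged family independently of $L$. Neither obstacle you flag is real. Appending a zero vector preserves goodness, so $M^f\to M^{f+1}$ is an honest morphism. Collision fibres need no separate estimate, since a map of constant local systems $E_{f+1}^j\to E_f^j$ on $\A^P$ is an isomorphism once it is one at a single distinct-eigenvalue stalk, where it is your Step~1 for $a=(1,\ldots,1)$.
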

\begin{proof}
Write $X_a=\operatorname{Rep}_a^{\ssmod}$ and
$G_a=\prod_i\GL_{a_i}\times\GL_Q$. The good framing locus is an
open subset $E_f^{\mathrm{good}}$ of
$E_f=X_a\times(\bigoplus_i\C^{a_i})^f$. The group $G_a$ acts freely
there, with quotient $M_a^f$. The common scalar in $G_a$ acts freely
on the nonzero framing tuples.

Put $C_0=\lfloor P^2/4\rfloor+\lfloor Q^2/4\rfloor$.
For a proper equal-slope subdimension $(\alpha_i,B)$, with
$A=\sum_i\alpha_i<P$, the choices of subspaces have dimension at most
\[
 \sum_i\alpha_i(a_i-\alpha_i)+B(Q-B)
 \leq A(P-A)+B(Q-B)\leq C_0.
\]
Requiring all $f$ vectors to belong to the chosen source sum has
codimension $f(P-A)\geq f$. The incidence image is closed because the
Grassmannians are projective. Taking the finite union over subdimensions
shows that $E_f\setminus E_f^{\mathrm{good}}$ has codimension at least
$f-C_0$. This includes the zero subrepresentation, which excludes the
all-zero framing tuple.

Localization and equivariant homotopy invariance give
\begin{equation}\label{eq:stableapprox}
 H^j(\mathfrak S_a)\xrightarrow{\sim}H^j(M_a^f)
 \qquad\bigl(j<2(f-C_0)-1\bigr).
\end{equation}
Indeed, cohomology with supports in a closed subset of complex codimension
$d$ vanishes below degree $2d$ in a smooth variety. Stratification and
the relative Thom isomorphism prove this assertion, and the Borel
spectral sequence gives the equivariant version.

Appending a zero framing vector defines $M_a^f\to M_a^{f+1}$.
The induced pullbacks agree with~\eqref{eq:stableapprox} in the stable
range. The same estimate applies uniformly to the ordered-eigenvalue
flag families, including colliding eigenvalues: fixed-diagonal upper
triangular loop matrices with the semistability condition form a smooth
open parameter space, and the same Grassmannians bound bad framings.
The pullbacks on the constant local systems $E_f^j$ commute with
permutations over distinct eigenvalues, hence everywhere. Denote their
stabilized $S_P$-representation by $E_\infty^j$.

The sign-summand identifications~\eqref{eq:springeridentity} are also
compatible with these maps, by proper base change for the same flag map.
Passing to the stable range gives
\[
 \Hom_{\prod_i S_{a_i}}(\sgn,E_\infty^j)
 =H^{j-2g_a}(\mathfrak S_a).
\]
Since $\delta_{\mathrm{fl}}-\delta_a=2g_a$, put
$R^\infty_{j-\delta_{\mathrm{fl}}}=E_\infty^j\otimes\sgn_{S_P}$.
This proves~\eqref{eq:stackyoung}. Over distinct eigenvalues the family
is the product of the configuration space with the thin-star stack.
Reordering is ordinary permutation of its source labels, which proves
the final assertion about the representation.
\end{proof}

\section{Schur positivity of BPS characters}
The primitive part of semistable cohomology determines the wall factor. In the zero-potential
acyclic setting used here, the BPS space is the centered intersection
cohomology of the coarse semistable moduli when stable objects exist,
and is zero otherwise; the interface with Hall factorization is
\cite[Proposition~6.14]{DM}. Intersection cohomology is normalized here
so that it agrees with ordinary cohomology for a smooth variety.

Fix coprime positive integers $p,q$. For $n\geq1$, put $P=np$, $Q=nq$.
Let $N_a$ be the coarse semistable star moduli for $|a|=P$ and sink
dimension $Q$. If the stable locus is nonempty, define
\begin{equation}\label{eq:bpspolynomial}
 \Omega_a(t)=\sum_j\dim\IH^j(N_a)t^{j-D_a},
 \qquad D_a=1-\sum_i a_i^2-Q^2+bPQ.
\end{equation}
Set $\Omega_a=0$ when that locus is empty. A nonempty stable locus is
dense, and the coarse moduli is projective because the star is acyclic.

The integrality theorem of \cite[Theorems~A and C]{DMi} expresses the
semistable cohomology as a graded symmetric algebra on BPS cohomology
tensored with the scalar-stabilizer factor. Its PBW form retains a
canonical map compatible with the source-permutation action. Here a graded symmetric
algebra uses the Koszul rule: exchanging degrees $i,j$ contributes
$(-1)^{ij}$. The additional sign prescribed by the Euler form is written
explicitly in the proof. Equivariance passes the source-permutation
representations from the stack to the BPS space.

\begin{lemma}\label{lem:pbw-equivariance}
Fix a finite source set $I$, with $b$ arrows from every source to the
sink, zero potential, and stability depending only on total source and
sink dimensions. At a fixed positive slope, the canonical PBW map of
\cite[Theorem~C]{DMi} commutes with permutations of $I$, acting by
relabeling dimension vectors. This compatibility preserves the centered
gradings, the scalar factor $H(B\C^*)_{\mathrm{vir}}$, and the
Euler-form symmetry used in that theorem. A summand supported on
$J\subset I$ is viewed on this same quiver by setting the other source
dimensions to zero.
\end{lemma}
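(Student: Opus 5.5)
The plan is to reduce the lemma to a functoriality property of the construction in \cite[Theorem~C]{DMi}: the PBW map is built from the cohomological Hall algebra multiplication together with the inclusion of BPS cohomology. Each ingredient is natural under isomorphisms of quivers with potential that preserve the stability function. So, for $\sigma$ a permutation of $I$, I would first define the induced automorphism $\sigma_*$ of the quiver. It permutes the sources, fixes the sink, and carries the $b$ arrows from source $i$ to the $b$ arrows from source $\sigma(i)$, with a fixed identification of arrow labels. The potential is zero, so it is preserved. Because the stability depends only on total source and sink dimensions, $\sigma_*$ preserves slopes, hence preserves the semistable loci at the fixed positive slope. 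On the dimension vectors $(a_i)_{i\in I},Q$, it acts by relabeling.

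Next I would check, step by step, that every ingredient of the PBW map is transported by $\sigma_*$. There are four. First, the isomorphisms of stacks $\mathfrak M_{a}^{\ssmod}\cong\mathfrak M_{\sigma a}^{\ssmod}$ and of coarse spaces $N_a\cong N_{\sigma a}$ commute with the maps from stack to coarse space. Second, the Hall multiplication comes from the correspondence of short exact sequences, which $\sigma_*$ maps to itself. Third, the BPS sheaf is defined intrinsically, as intersection cohomology of $N_a$ in the zero-potential case (see \cite[Proposition~6.14]{DM}), so it is transported by the isomorphism. Fourth, the scalar factor $H(B\C^*)_{\mathrm{vir}}$ comes from the common scalar stabilizer, which is fixed by $\sigma_*$.

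For the gradings, the centering shifts are given by the dimensions $D_a$ and $\delta_a$. These depend only on $\sum_i a_i^2$, $P$ and $Q$, so they are invariant under relabeling. The Euler form $\chi(\mathbf d,\mathbf e)=\sum_i d_ie_i+QQ'-b\sum_i(d_iQ'+\ldots)$ is symmetric under simultaneous permutation of the source coordinates. This gives invariance of the sign twist that \cite[Theorem~C]{DMi} prescribes when it symmetrizes the algebra. It also gives invariance of the Koszul signs, which involve only cohomological degrees.

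For a summand supported on $J\subset I$, I would use the full subquiver on $J$ plus the sink. Extension by zero identifies its representations with those of the whole quiver having zero dimensions off $J$. Stability uses only totals, so the semistable loci agree. Then $\sigma$ maps the $J$-summand to the $\sigma(J)$-summand compatibly.

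The main obstacle is that the PBW map of \cite[Theorem~C]{DMi} is not literally canonical: it depends on choices, such as an ordering of slopes and a chosen splitting or lift of the BPS inclusion. I would handle this by noting that at a fixed slope only one slope is involved, so no ordering enters. For the lift, I would take the canonical one given by the perverse filtration, i.e.\ the lowest perverse piece, which is preserved by any automorphism. If the symmetrization map depends on an ordering of factors, I would average over $\mathfrak S_I$-orbits. Averaging over the finite group $\mathfrak S_I$ keeps the map an isomorphism, because the associated graded map is equivariant and unchanged by averaging.
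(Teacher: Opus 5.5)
Your proposal is correct and follows essentially the paper's route: the permutation induces an automorphism of the star that preserves stability, and it transports each ingredient of the PBW map. These ingredients are the semistable stacks and coarse spaces, the intersection complexes and perverse truncation defining the primitive inclusion, the Hall correspondence, the scalar subgroup, and the shifts and Euler-form signs, which depend only on relabeling-invariant data; subsets are handled by setting source dimensions to zero.

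Your averaging fallback is unnecessary, since at a fixed slope the symmetrization is the choice-free inclusion of invariants and the primitive inclusion is canonical. Averaging would also replace the canonical map that the lemma is about by a different map.
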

\begin{proof}
A permutation of $I$ permutes source spaces and their arrow maps and
fixes the sink. It preserves semistability and commutes with
semisimplification. Pullback by this isomorphism preserves intersection
complexes, perverse truncation, and the full-support summand defining
the primitive inclusion. The zero-potential inclusion and its absolute
version are those of \cite[Corollaries~4.11 and 5.9]{DMi}.

The common scalar subgroup is carried identically to itself. Hence
the induced action on its cohomology, including its degree-two
generator, is the identity. Relabeling also carries the stack of short
exact sequences and both maps of the Hall correspondence to themselves.
Consequently it commutes with the primitive inclusion and Hall
multiplication, which construct the PBW map
\cite[Theorem~C and \S6.3]{DMi}.

For zero potential, the monodromic vanishing-cycle functor is the
identity by \cite[Example~2.11]{DMi}. Verdier duality on the smooth
semistable stack identifies the absolute cohomology in
\cite[equation~(1)]{DMi} with ordinary cohomology in degrees
$j-\dim\mathfrak S_a$. These identifications commute with source
permutations. The scalar factor has degrees $1,3,5,\ldots$ and
character $t/(1-t^2)$. All shifts and Tate normalizations depend on
dimensions and Euler pairings, which relabeling preserves; the
relabeling maps have degree zero. The Euler-form
symmetrizing sign is likewise preserved, so the resulting PBW map is
equivariant with its original twisted symmetry. Setting a source
dimension to zero removes its vector space and arrows and has the
same effect in each correspondence. This proves the compatibility
for summands with smaller source support.
\end{proof}

\begin{theorem}\label{thm:colors}
For every $n$ there is a finite graded rational $S_{np}$-representation
$R_n$, independent of $k$, such that
\[
 \Omega_a(t)=\sum_d\dim(R_{n,d})^{\prod_iS_{a_i}}t^d.
\]
Consequently $\Omega_n(t;Y)=\sum_{|a|=np}\Omega_a(t)Y^a$ is
Schur-positive over $\N[t^{\pm1}]$.
\end{theorem}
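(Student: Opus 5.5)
The plan is to extract $R_n$ from the stack-level representation $R^\infty$ of Proposition~\ref{prop:stack} by running the PBW factorization of \cite[Theorem~C]{DMi} equivariantly. We work on the star with $P=np$ labeled one-dimensional sources, i.e.\ $I=\{1,\ldots,np\}$. Every quiver $\mathcal Q_a$ is a \emph{merge} of this thin star: partition $I$ into $k$ blocks of sizes $a_i$.

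\textbf{Step 1: a Hall-theoretic dictionary for all colorings at once.} Write $\mathcal H^{\mathrm{th}}=\bigoplus_{n}\bigoplus_{e}H(\mathfrak S^{\mathrm{th}}_{e})$ for the cohomology of the semistable thin stacks at slope $(p,q)$, where $e$ runs over $0/1$ source vectors. By Lemma~\ref{lem:pbw-equivariance}, $S_{np}$ acts compatibly with the PBW isomorphism
\[
\Sym\Bigl(\textstyle\bigoplus_{e}\mathrm{BPS}_e\otimes H(B\C^*)_{\mathrm{vir}}\Bigr)\cong\mathcal H^{\mathrm{th}},
\]
with the Koszul and Euler-form signs. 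Define $R_n$ as the $S_{np}$-module induced from the BPS spaces of dimension $(1^{np},nq)$, twisted by the global sign as in Proposition~\ref{prop:stack}. Here we use Lemma~\ref{lem:pbw-equivariance}: a thin BPS summand supported on $J\subset I$ is induced from $\operatorname{Stab}(J)$. $R_n$ is finite because $N_a$ is projective.

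\textbf{Step 2: the key identity.} We want
\[
\mathrm{BPS}_a\cong\Hom_{\prod S_{a_i}}\bigl(\sgn,\mathrm{BPS}^{\mathrm{th}}_{(1^{np},nq)}\bigr),
\]
with matching centered gradings. Take the $\prod_iS_{a_i}$-sign isotypic part of both sides of the thin PBW isomorphism. By \eqref{eq:stackyoung}, now applied to all $n'\le n$ and all subcolorings, the right side is the semistable cohomology $H(\mathfrak S_a)$ for every $a$. On the left, the sign-isotypic part of a graded symmetric algebra of induced modules is computed by Mackey's formula. It becomes a graded symmetric algebra on the sign-isotypic parts of the individual induced BPS pieces. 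Comparing with the colored PBW theorem for $\mathcal Q_a$, and inducting on $n$, identifies the generators degree by degree: the generating functions are determined by the plethystic logarithm. Both sides have the same plethystic exponential, and they agree for smaller $n$, so the top BPS pieces agree. Since both sides are invariants with the same $t$-grading, we get $\Omega_a(t)=\sum_d\dim(R_{n,d})^{\prod S_{a_i}}t^d$; the sign twist converts sign-isotypic parts into invariants.

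\textbf{Step 3: Schur positivity.} For a graded $S_{np}$-module $R$, the generating function $\sum_a\dim(R^{\prod S_{a_i}})Y^a$ is its Frobenius characteristic evaluated at $k$ variables. This is because $\dim R^{S_a}$ is the pairing with $h_a$. Irreducibles $V_\lambda$ contribute $s_\lambda(Y)$, and vanish if $\ell(\lambda)>k$. Hence $\Omega_n(t;Y)=\sum_\lambda\sum_d[R_{n,d}:V_\lambda]t^d s_\lambda(Y)$, which has $\N[t^{\pm1}]$ coefficients. Independence of $k$ holds because $R_n$ was built on the thin star.

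\textbf{Main obstacle.} The hard part is Step 2's sign bookkeeping. We must check that the Mackey decomposition of sign-isotypic parts of a Koszul-graded symmetric algebra reproduces exactly the colored Euler-form symmetrization of \cite[Theorem~C]{DMi}. Mismatched parities would turn symmetric powers into exterior powers and break the inductive comparison. Our first approach is to verify that the centered degree of each BPS piece has parity fixed by $\delta_a$ modulo $2$. This parity is independent of the coloring, since $\sum a_i^2\equiv P$. With that, the global sign twist of $R^\infty$ absorbs the discrepancy uniformly.
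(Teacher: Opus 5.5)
Your route is the paper's: an $S_{np}$-equivariant PBW isomorphism on the thin star, then $\prod_iS_{a_i}$-sign-isotypic parts (equivalently, Young invariants after the $\sgn_I$ twist, i.e.\ evaluating a symmetric sequence on a color space), then comparison with the colored integrality isomorphism through Proposition~\ref{prop:stack}, induction on $n$, and finally the Frobenius characteristic. The step you flag as the main obstacle is not closed by the fix you propose, and that is a genuine gap.

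\textbf{Why BPS parity is not enough.} A coloring-independent parity of BPS degrees ($\equiv\delta_a+1$) controls only the Koszul part of the exchange sign. It does not determine the Euler-form twist $\tau(d_1,d_2)=\chi(d_1,d_2)+\chi(d_1,d_1)\chi(d_2,d_2)$ between two thin dimension vectors with disjoint supports $I_1,I_2$. That twist is what the block shuffles in $S_{np}$ see. For instance, inside $H_I$ with $|I|=2p$, the square of the $n=1$ pieces is induced from $S_p\wr S_2$. Whether the $S_2$ acts by the Koszul swap or by its negative is decided by $\tau$, and this changes the sign-isotypic dimensions. So the claim that ``the global sign twist absorbs the discrepancy uniformly'' needs a computation you have not made. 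Your Mackey step, which asserts an ordinary graded symmetric algebra on the sign-isotypic pieces, depends on it.

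\textbf{The missing computation.} For disjoint thin supports we have $\sum_ia_ia_i'=0$. Hence, modulo $2$,
\[
 \chi_{12}=Q_1Q_2-bP_1Q_2\equiv n_1n_2(q+bpq),\qquad
 \chi_{ii}=P_i+Q_i^2-bP_iQ_i\equiv n_i(p+q+bpq),
\]
so $\tau_{12}\equiv n_1n_2p\equiv P_1P_2\pmod 2$. The shuffle exchanging two label blocks has sign $(-1)^{P_1P_2}$. Therefore, after tensoring both $H_I$ and $B_I$ with $\sgn_I$, the twisted PBW becomes an ordinary Koszul-symmetric species $R_H=\Sym(T\otimes R_B)$. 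Only then does the sign-isotypic part (equivalently, the symmetric monoidal evaluation $E_k$) become an ordinary graded symmetric algebra on the Young invariants of $R_B$.

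\textbf{The colored side.} No further sign check is needed there, because the comparison is only of characters. For every $d$ one has $\tau(d,d)=\chi(d,d)+\chi(d,d)^2\equiv0$, and exchange signs between distinct dimension vectors do not affect dimensions. This is why the plethystic-logarithm comparison and your induction on $n$ go through once $\tau_{12}\equiv P_1P_2$ is established.

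A minor point: $R_n$ is simply $\sgn_I\otimes B_I$ with its geometric $S_{np}$-action. Nothing is induced at the top level; induction only describes the lower PBW summands.
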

\begin{proof}
For a finite source set $I$ of cardinality $P=np$, consider the star
whose sources all have dimension one and whose sink has dimension $nq$.
Let $H_I$ and $B_I$ be its centered semistable-stack cohomology and BPS
cohomology, with the geometric action of source permutations. The latter
is the centered intersection cohomology in~\eqref{eq:bpspolynomial},
or zero under its stated convention.

We use the cohomological integrality and canonical PBW isomorphisms of
\cite[Theorems~A and C]{DMi}. At zero potential, their scalar factor is
$T=H(B\C^*)_{\mathrm{vir}}$, with character $t/(1-t^2)$.
The canonical PBW exchange has, besides the cohomological Koszul sign,
the parity
\begin{equation}\label{eq:tau}
 \tau(d_1,d_2)=\chi(d_1,d_2)
             +\chi(d_1,d_1)\chi(d_2,d_2)\pmod2.
\end{equation}
The Euler form of the star is
\[
 \chi((a,Q),(a',Q'))=\sum_i a_i a_i'+QQ'-b|a|Q'.
\]
Its antisymmetric part vanishes between dimension vectors of the fixed
projected slope $(p,q)$. Thus the generic-slope hypothesis of those
theorems is satisfied, including noncoprime multiples.

By Lemma~\ref{lem:pbw-equivariance}, this PBW isomorphism is
equivariant for source permutations, with its stated twisted symmetry.

Restrict this PBW isomorphism to dimension one at each source in $I$.
Every factor is supported on a subset $I_j$, and these subsets partition
$I$. Its source and sink dimensions are $(P_j,Q_j)=(n_jp,n_jq)$.
There are no nonzero source-free factors at this positive slope.
For two disjoint source supports,
\[
 \chi_{12}=Q_1Q_2-bP_1Q_2,\qquad
 \chi_{ii}=P_i+Q_i^2-bP_iQ_i.
\]
Reducing~\eqref{eq:tau} modulo two gives
\begin{equation}\label{eq:signcancel}
 \tau_{12}=n_1n_2\bigl(q-bpq+(p+q-bpq)^2\bigr)
           =P_1P_2\pmod2.
\end{equation}
Tensor both $H_I$ and $B_I$ with $\sgn_I$, in cohomological degree zero.
Exchanging two sign-twisted label blocks supplies $(-1)^{P_1P_2}$.
Equation~\eqref{eq:signcancel} cancels the additional PBW sign.
We obtain, as graded symmetric sequences,
\begin{equation}\label{eq:species}
 R_H=\Sym_{\mathrm{seq}}(T\otimes R_B),\qquad
 R_{H,I}=\sgn_I\otimes H_I,
 \quad R_{B,I}=\sgn_I\otimes B_I,
\end{equation}
with ordinary Koszul symmetry. Here the tensor product of symmetric
sequences sums over disjoint decompositions of a finite label set.

Evaluate such a sequence on an even $k$-dimensional color space $V$:
\[
 E_k(R)=\bigoplus_P(R_P\otimes V^{\otimes P})^{S_P}.
\]
This functor is symmetric monoidal: induction from
$S_{P_1}\times S_{P_2}$ becomes tensor product, compatibly with block
permutations. Its weight-$a$ subspace is $R_P^{\prod_iS_{a_i}}$.
Proposition~\ref{prop:stack} identifies the character of $E_k(R_H)$
with the colored semistable-stack cohomology and its
stack-dimension shift. Hence~\eqref{eq:species} gives
\[
 H_{\mathrm{colored}}
 =\Sym\bigl(T\otimes E_k(R_B)\bigr)
\]
at the level of full graded characters. The ordinary integrality
isomorphism for the colored quiver also gives
\[
 H_{\mathrm{colored}}=\Sym(T\otimes B_{\mathrm{colored}}).
\]
Interpret these character identities in
$\Q((t))[[Y_1,\ldots,Y_k]]$, completed by total source degree.
At each fixed color weight, the BPS spaces are finite-dimensional and
$T$ is bounded below and degreewise finite. Only finitely many
decompositions into positive source weights contribute, so every
coefficient belongs to $\Q((t))$. In source degree $np$, terms with
at least two factors use smaller positive source degrees and cancel
by induction. The remaining equality is $T$ times the equality of
primitive characters; multiplying by $t^{-1}-t$ proves their equality.

Take $R_n=R_{B,I}$ for $|I|=np$. The preceding equality identifies its
Young invariants with~\eqref{eq:bpspolynomial}. The Frobenius character
of a finite graded symmetric-group representation is a nonnegative
graded sum of Schur polynomials, proving the theorem.
\end{proof}

\section{Positivity of quantum wall actions}
A wall acts on a quantum torus by conjugation. If its series is
$f(Z)\in1+Z\Q(v)[[Z]]$, with $Z=X^D$, then
\[
 \Ad_{f(Z)}(X^m)=X^m
 \frac{f(v^{-2\det(D,m)}Z)}{f(Z)},
 \qquad \Ad_f(a)=faf^{-1}.
\]
Thus positivity of a wall action is determined by its shifted quotient. Along the ray $D=(-bp,cq)$,
the possible pairings on the original lattice have smallest positive
value $\gcd(bp,cq)$.

Assume $c=kb$. Put
\[
 u=(-b,0),\qquad w=(0,c),\qquad t=v^{-c},
 \qquad A_0=\Psi_{v^{-b}}(X^u),\quad B_0=\Psi_{v^{-c}}(X^w).
\]
The mixed ray factors $f_{p,q}(Z)$ are defined by the unique ordered
factorization
\begin{equation}\label{eq:rayfactor}
 B_0A_0B_0^{-1}
 =A_0\prod_{q/p\,\mathrm{increasing}}f_{p,q}(X^{pu+qw}),
 \qquad p,q>0,\quad\gcd(p,q)=1.
\end{equation}
Products are completed in nonnegative $(u,w)$-degree: modulo terms of
total degree greater than a fixed bound, only finitely many factors
contribute. Each $f_{p,q}$ has constant term one and contains all
positive multiples of its primitive degree. Ordering the slopes makes
the factorization unique, recursively in total degree.

\begin{proposition}\label{prop:wall}
Let $g=\gcd(p,k)$ and $Z=X^{pu+qw}$. Then
\begin{equation}\label{eq:minimal}
 \frac{f_{p,q}(v^{-2bg}Z)}{f_{p,q}(Z)}
 \in1+Z\pos[[Z]].
\end{equation}
For every $m\in\Z^2$, the oriented action on $X^m$ is positive:
use $\Ad_{f_{p,q}}$ if $\det(pu+qw,m)\geq0$ and
$\Ad_{f_{p,q}^{-1}}$ if this determinant is nonpositive.
\end{proposition}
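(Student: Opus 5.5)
Set $x=v^{-b}$, so that $t=v^{-c}=x^k$ and $v^{-2bg}Z=x^{2g}Z$. Then \eqref{eq:minimal} has exactly the form of the conclusion of Proposition~\ref{prop:difference}. The plan is to identify $f_{p,q}$ with the product $f(Z)$ built there, taking the coefficients $\beta_{n,a,j}$ from the BPS characters $\Omega_a(t)$ of the star quiver with $k$ sources, $b$ arrows from each source to the sink, and sink dimension $nq$. Theorem~\ref{thm:colors} gives Schur positivity of $\Omega_n(t;Y)$ over $\N[t^{\pm1}]$. Proposition~\ref{prop:difference} then gives the shifted quotient in $1+Z\N[x^{\pm1}][[Z]]$. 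Since $x=v^{-b}$, this lies in $1+Z\pos[[Z]]$.

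\textbf{Step 1: split $A_0$.} The first step is to split $A_0$ into $k$ commuting factors. Since $c=kb$, I will write $X^u$ as a product of $k$ formal "color" monomials $X^{u_i}$, $0\le i<k$, in an auxiliary quantum torus of rank $k+1$. These are chosen so that the $\Psi_{v^{-b}}$-factor of $X^u$ equals $\prod_i\Psi_{x^k}(x^{k-1-2i}X^{u_i})$. This is the classical identity splitting a $q$-dilogarithm at parameter $s$ into $k$ dilogarithms at $s^k$ with shifted arguments $s^{k-1-2i}$, and the specialization $Y_i\mapsto x^{k-1-2i}$ matches the principal alphabet. The commutator $B_0(\cdot)B_0^{-1}$ in the enlarged torus is then a scattering problem for the star quiver with $k$ sources and one sink, $b$ arrows each.

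\textbf{Step 2: express the wall functions.} Next I apply the Reineke / Davison--Meinhardt description of wall-crossing for an acyclic quiver. Each ray function of slope $(p,q)$ in the enlarged scattering diagram is a product of quantum dilogarithms $\Psi_{t}\bigl((-1)^jt^{j}Y^aZ^n\bigr)^{(-1)^j\beta_{n,a,j}}$. Here the exponents are the BPS invariants, that is, the coefficients of $\Omega_a(t)$ with the centered grading convention \eqref{eq:bpspolynomial}, through cohomological integrality \cite[Theorems~A and C]{DMi}. Projecting back along $Y_i\mapsto x^{k-1-2i}$ collapses all enlarged rays of projected slope $q/p$ into one, and gives $f_{p,q}=f$ as defined in Proposition~\ref{prop:difference}. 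Two checks are needed here.
\begin{itemize}
\item \emph{Parity.} The parity hypothesis holds: $D_a\equiv 1+np+nq+bn^2pq\pmod 2$ depends only on $n$, since $\sum_i a_i^2\equiv\sum_i a_i$. Moreover $\IH$ of a projective variety lives in degrees $\equiv 0$ modulo $2$ in the relevant purity sense (odd degrees vanish for these quiver moduli). So all degrees of $\Omega_n$ share one parity $\epsilon_n$.
\item \emph{Uniqueness.} The ordered factorization \eqref{eq:rayfactor} is unique, so the projected diagram's ray factors are the $f_{p,q}$.
\end{itemize}

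\textbf{Step 3: positivity of oriented actions.} This follows formally. For $m\in\Z^2$, write $\det(pu+qw,m)=\ell\gcd(bp,cq)$. Then $\Ad_{f}(X^m)=X^m f(v^{-2\det}Z)/f(Z)$. Because $\gcd(bp,cq)=b\gcd(p,kq)=bg$ (as $\gcd(p,q)=1$), for $\ell\ge0$ this quotient telescopes as $\prod_{r=0}^{\ell-1}f(x^{2g(r+1)}Z)/f(x^{2gr}Z)$. Each factor is \eqref{eq:minimal} with $Z$ replaced by a $\pos$-monomial multiple of $Z$, hence lies in $1+Z\pos[[Z]]$. In the case $\ell\le0$ with $\Ad_{f^{-1}}$, the quotient is $f(Z)/f(x^{-2g|\ell|}Z)$, which telescopes the same way.

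\textbf{Main obstacle.} The main obstacle is Step 2: matching $f_{p,q}$ precisely with the BPS product. The sign conventions $(-1)^j$, the twisted variable $t=v^{-c}$ versus the split parameter $x^k$, and the centering shifts must all line up, and the noncoprime multiples $n(p,q)$ must be handled within one ray. I would first verify the match at low degree, $n=1$ with $p=q=1$, where the star moduli are Grassmannians and $f_{1,1}$ can be computed directly from the pentagon-type identity. After that I would rely on the scattering construction of \cite{DM} and \cite[Proposition~6.14]{DM} for the general identification.
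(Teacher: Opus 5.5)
Your proposal is essentially the paper's proof: you split $A_0$ into $k$ color dilogarithms at parameter $t=x^k$ (via $\sum_i x^{h(k-1-2i)}=[k]_{x^h}$), identify the projected mixed factor with the BPS product \eqref{eq:starfactor} for the star quiver at the stability \eqref{eq:stability}, check that the parity depends only on $n$, feed Theorem~\ref{thm:colors} into Proposition~\ref{prop:difference}, and telescope using $\gcd(bp,cq)=bg$. One correction: vanishing of odd-degree $\IH$ is not a general purity fact for projective varieties, but for these quiver moduli it is exactly the BPS parity statement \cite[Lemma~6.13]{DM} that the paper cites, which gives the same parity $1+n(p+q-bpq)$ as your computation of $D_a$; the sign conventions you flag are settled in the paper by passing to the opposite torus $B\mapsto-B$ and reversing the color order.
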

\begin{proof}
Take the star lattice with source basis vectors $U_0,\ldots,U_{k-1}$
and sink vector $W$. For the Hall torus use
$B(d,e)=\chi(e,d)-\chi(d,e)$, so $B(U_i,W)=b$.
Thus the torus product is $z^dz^e=t^{B(d,e)}z^{d+e}$.
The representations are maps from sources to sink; this is
equivalently the opposite quiver in the right-module convention of
\cite[\S6]{DM}.

Let $A_H$ and $B_H$ be the source-only and sink-only Hall series.
Harder--Narasimhan factorization with source weight one and sink weight
zero, and then with the weights reversed, gives
\begin{equation}\label{eq:hall}
 B_HA_H=A_H\left(\prod_{q/p\,\mathrm{increasing}}H_{p,q}\right)B_H.
\end{equation}
The slope inequality is exactly~\eqref{eq:stability} at dimensions
$(np,nq)$. At the reversed stability every mixed representation has its
sink subrepresentation as a destabilizing subobject. By the integrated
Hall factorization and BPS description in
\cite[Theorems~6.3, 6.6 and Proposition~6.14]{DM},
$H_{p,q}$ is the plethystic exponential of the centered BPS characters
$\Omega_a(t)$ from~\eqref{eq:bpspolynomial}. Hall multiplication here
encodes extensions; the Harder--Narasimhan filtration groups their
successive semistable factors by slope. Integration replaces these
classes by series in the torus with Euler-form multiplication. The
plethystic exponential packages the symmetric powers of the primitive
spaces; its signed quantum-dilogarithm expression is given in
\eqref{eq:starfactor}, fixing the convention used below.

Pass to the opposite torus $B\mapsto-B$. The map $\iota$ fixing
normalized monomials is an anti-isomorphism; therefore
$g\mapsto\iota(g^{-1})$ is a group homomorphism. It sends the source
and sink Hall factors to
$\prod_i\Psi_t(z^{U_i})$ and $\Psi_t(z^W)$, respectively, and
sends~\eqref{eq:hall} to the order in~\eqref{eq:rayfactor}.
If $\beta_{n,a,j}=[t^j]\Omega_a(t)$, its mixed factor is
\begin{equation}\label{eq:starfactor}
 \prod_{n,a,j}
 \Psi_t\left((-1)^jt^jz^{\sum_i a_iU_i+nqW}\right)
 ^{(-1)^j\beta_{n,a,j}}.
\end{equation}
This follows from the plethystic convention of
\cite[(26), (28), (30)]{DM}. In particular the odd-degree factor is
$\Psi_t(-t^jT)^{-1}$.

The opposite star torus maps to the original torus by
\begin{equation}\label{eq:projection}
 z^{U_i}\longmapsto v^{b(k-1-2i)}X^u,
 \qquad z^W\longmapsto X^w.
\end{equation}
Pairings agree, since $t^{-b}=v^{bc}=v^{-\det(u,w)}$.
Each degree fiber is a finite set of weak compositions. Comparing formal
logarithms, the identity
\[
 \sum_{i=0}^{k-1}
 \frac{v^{bh(k-1-2i)}}{v^{-ch}-v^{ch}}
 =\frac1{v^{-bh}-v^{bh}}
\]
shows that the incoming source product merges to $A_0$. The sink factor
maps to $B_0$. Uniqueness of ordered factorization identifies the image
of~\eqref{eq:starfactor} with $f_{p,q}$.

The BPS parity assertion of \cite[Lemma~6.13]{DM} says that its
degrees have parity $\chi(d,d)+1$. Since $\sum_i a_i=np$,
\begin{equation}\label{eq:parity}
 \chi(d,d)+1\equiv1+n(p+q-bpq)\pmod2,
\end{equation}
independently of the color composition. Set $x=v^{-b}$. The projected
argument in~\eqref{eq:starfactor} is
$(-1)^jx^{kj-\sum_i(k-1-2i)a_i}Z^n$.
Color reversal changes the minus sign to a plus sign and leaves the
moduli problem unchanged. Theorem~\ref{thm:colors},
\eqref{eq:parity} and Proposition~\ref{prop:difference} now give
\eqref{eq:minimal}.

Finally, for $D=pu+qw=(-bp,cq)$,
\[
 \det(D,\Z^2)=\gcd(bp,cq)\Z=b\gcd(p,k)\Z.
\]
If $a=\det(D,m)=Lbg\geq0$, then
\[
 \frac{f_{p,q}(v^{-2a}Z)}{f_{p,q}(Z)}
 =\prod_{j=0}^{L-1}
 \frac{f_{p,q}(v^{-2(j+1)bg}Z)}{f_{p,q}(v^{-2jbg}Z)}
\]
is positive by~\eqref{eq:minimal}. The identity
$f_{p,q}(Z)X^m=X^mf_{p,q}(v^{-2a}Z)$ gives the adjoint action. Normalizing
$X^mZ^n$ multiplies its coefficient by $v^{na}$ and preserves positivity.
For $a=-Lbg\leq0$, use
$f_{p,q}(Z)/f_{p,q}(v^{2Lbg}Z)$, the same positive quotient after a monomial shift
of its argument. Zero pairing gives the identity action.
\end{proof}

\section{Identification with quantum greedy elements}
\subsection{A three-torus criterion}
Put $R=\Z[v^{\pm1}]$ and let $\mathcal T_{ij}$ be the quantum
Laurent torus on $X_i,X_j$. In particular,
\[
 X_0=X_2^{-1}(v^bX_1^b+1),\qquad
 X_3=(v^cX_2^c+1)X_1^{-1}.
\]
Denote the classical greedy element by $x[A,N]$.

\begin{lemma}\label{lem:recognition}
Suppose $F\in\mathcal T_{12}\cap\mathcal T_{01}\cap\mathcal T_{23}$,
its normalized $\mathcal T_{12}$ coefficients belong to $\pos$, and
$F|_{v=1}=x[A,N]$. Then $F=v^sG_{bc}(A,N)$ for some $s\in\Z$.
\end{lemma}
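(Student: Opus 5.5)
The plan is to transport everything from the classical element $x[A,N]$ to $F$ using one observation. If $c(v)\in\pos$, then $c(1)=0$ forces $c(v)=0$, and $c(1)=1$ forces $c(v)=v^s$ for a single $s\in\Z$. We then feed the resulting support information into the uniqueness characterization of \cite[Proposition~3.7]{LLRZ}.

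\emph{Step 1 (support).} Write
\[
 F=\sum_{(r,s)}\phi_{r,s}(v)X^{(r,s)}
\]
in $\mathcal T_{12}$, with $\phi_{r,s}\in\pos$. Specializing at $v=1$ gives the classical expansion, whose coefficients are $\phi_{r,s}(1)$. Since the classical greedy element has support contained in the greedy region of \cite{LLRZ}, the observation shows that $\phi_{r,s}=0$ outside that region. The classical leading coefficient at $(-A,-N)$ is $1$, so $\phi_{-A,-N}=v^s$ for some $s\in\Z$. Put
\[
 F'=v^{-s}F.
\]
Multiplication by the unit $v^{-s}$ preserves membership in $\mathcal T_{12}\cap\mathcal T_{01}\cap\mathcal T_{23}$. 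After this normalization, $F'$ has the form~\eqref{eq:greedy}: its leading coefficient is $1$, and all other exponents have the form $(-A+bp,-N+cq)$ with $(p,q)$ in the greedy support region.

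\emph{Step 2 (divisibility).} In \cite{LLRZ} the remaining conditions characterizing $G_{bc}(A,N)$ are the quantum divisibility conditions. These say that the $X_1$-slices (respectively the $X_2$-slices) of the expansion are divisible by the appropriate powers of $v^cX_2^c+1$ (respectively $v^bX_1^b+1$). I will check that these conditions are equivalent to Laurentness in $\mathcal T_{23}$ and $\mathcal T_{01}$, using the substitutions $X_3=(v^cX_2^c+1)X_1^{-1}$ and $X_0=X_2^{-1}(v^bX_1^b+1)$.

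Concretely, rewrite a slice $X_1^{r}\cdot g(X_2)$ with $r<0$ in $\mathcal T_{23}$. Since $X_1^{-1}=(v^cX_2^c+1)^{-1}X_3$ up to the $v$-twist from~\eqref{eq:torus}, the power $X_1^{r}$ becomes $X_3^{-r}$ times $-r$ twisted copies of $(v^cX_2^c+1)^{-1}$. Membership in $\mathcal T_{23}$ then says precisely that $g$, suitably shifted by the $q$-commutation, is divisible by the corresponding product of twisted factors $v^cv^{2ci}X_2^c+1$. This is the divisibility condition of \cite{LLRZ}, and the argument for $\mathcal T_{01}$ is symmetric. If \cite[Proposition~3.7]{LLRZ} is already phrased as a statement about elements of $\mathcal T_{12}\cap\mathcal T_{01}\cap\mathcal T_{23}$ (the quantum upper cluster algebra), this step reduces to citing it.

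\emph{Step 3 (conclusion).} With the support, the leading term and the divisibility (equivalently, three-torus Laurentness) established, \cite[Proposition~3.7]{LLRZ} gives $F'=G_{bc}(A,N)$, and hence $F=v^sG_{bc}(A,N)$.

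I expect the main obstacle to be Step 2: matching exactly the hypotheses of \cite[Proposition~3.7]{LLRZ}. This involves keeping track of the $v$-twists in the factors $(v^cX_2^c+1)$ under reordering, and confirming that the support region there agrees with the classical one used in Step 1, including its boundary conventions. The positivity input is used only in Step 1, and that step is straightforward.
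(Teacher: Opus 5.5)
Your proposal is correct and follows essentially the same route as the paper. Coefficientwise positivity forces the support of $F$ into the classical greedy support and the leading coefficient to be $v^s$. Laurentness in $\mathcal T_{01}$ and $\mathcal T_{23}$ then yields the row and column quantum-binomial divisibility conditions, and \cite[Proposition~3.7]{LLRZ} concludes.

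The only differences are minor:
\begin{itemize}
\item For the divisibility step the paper cites \cite[Lemma~2.2]{LLRZ}. Your direct computation with the twisted factors $v^{c(2i+1)}X_2^c+1$ recovers those conditions, including the parameter $v^b$ for the $X_2$-slices, which the paper notes is misprinted in \cite[equation~(2.1)]{LLRZ}.
\item You use containment of supports, while the paper uses equality. Containment is enough, since only vanishing outside the greedy region is needed.
\end{itemize}
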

\begin{proof}
A positive Laurent polynomial vanishes at $v=1$ only if it is zero.
Hence the support of $F$ is exactly that of $x[A,N]$. Its coefficient
at $X^{(-A,-N)}$ specializes to one and is positive, so it is $v^s$.
Thus $Y=v^{-s}F$ is pointed at $(A,N)$, with leading coefficient one,
and has the classical greedy pointed support.

The region used in the support characterization of
\cite[Definition~3.1]{LLRZ} is the following subset of the nonnegative
$(p,q)$-plane. It is the origin if $A,N\leq0$; the segment
$q=0$, $0\leq p\leq N$ if $A\leq0<N$; and the segment
$p=0$, $0\leq q\leq A$ if $N\leq0<A$. For $0<bN\leq A$
it is $p\leq N$, $q\leq A-bp$; for $0<cA\leq N$ it is
$q\leq A$, $p\leq N-cq$. In the remaining positive case it is
\[
 \left\{p<A/b,\quad q+\left(b-\frac{bN}{cA}\right)p<A\right\}
 \ \cup\
 \left\{q<N/c,\quad p+\left(c-\frac{cA}{bN}\right)q<N\right\}
 \ \cup\{(0,A),(N,0)\}.
\]
The strict boundary conditions are retained by equality of supports.

Write $Y=\sum d(p,q)X^{(-A+bp,-N+cq)}$ and
$B_{m,u}(T)=\sum_i\qbin mi uT^i$ for $m\geq0$.
Adjacent Laurentness gives
\[
 B_{N-cq,v^b}(T)\mid\sum_p d(p,q)T^p\quad(0\leq q<N/c),
\]
and the analogous column divisibility by $B_{A-bp,v^c}$ for
$0\leq p<A/b$. These are the divisibility conditions of
\cite[Lemma~2.2]{LLRZ}. The horizontal parameter is $v^b$:
the exchange relation gives, for $m\geq0$ and $r\in\Z$,
\[
 v^{mr}X_0^mX_1^r
   =\sum_{i=0}^m\qbin mi{v^b}X^{(r+bi,-m)}.
\]
This also fixes the parameter typo in the first divisor of
\cite[equation~(2.1)]{LLRZ}, in the cited version.
The support and divisibility characterization
\cite[Proposition~3.7]{LLRZ} gives $Y=G_{bc}(A,N)$.
\end{proof}

\begin{lemma}\label{lem:positive-specialization}
Let $L$ be a lattice and $C\subset L$ a finitely generated monoid
contained in a pointed rational cone. Consider the quantum-torus
completion whose supports lie in a finite union of translates of $C$,
with the topology given by an integral degree strictly positive on
$C\setminus\{0\}$. Suppose
\[
 F=\sum_{m\in L}c_m(v)X^m,\qquad c_m(v)\in\pos,
\]
and its coefficientwise specialization is a finite Laurent polynomial.
Then $F$ has finite support and
$\supp(F)=\supp(F|_{v=1})$.
\end{lemma}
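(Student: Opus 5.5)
The plan is to treat $F$ as a formal series whose coefficients can be grouped by degree, and to show that specializing $v=1$ neither merges nor cancels anything. Let $\deg:L\to\Z$ be the integral degree that is strictly positive on $C\setminus\{0\}$. Write $\supp(F)\subset\bigcup_{r=1}^R(m_r+C)$. Since $C$ is finitely generated and contained in a pointed rational cone, for each integer $e$ the set $\{m\in\bigcup_r(m_r+C):\deg m\leq e\}$ is finite. The reason is that $\deg$ is bounded below on each translate, and on $C$ itself it dominates a norm, since $C$ has finitely many generators each of positive degree. Consequently the truncation $F_{\leq e}=\sum_{\deg m\leq e}c_m(v)X^m$ is a finite sum. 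In particular the specialization $F|_{v=1}=\sum_m c_m(1)X^m$ makes sense termwise: each coefficient $c_m$ is a Laurent polynomial, and evaluation at $v=1$ happens monomial by monomial.

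The key observation is that in the normalized basis $X^m$ the specialization is coefficientwise. Setting $v=1$ does not identify distinct monomials $X^m$ and $X^{m'}$, since at $v=1$ the torus becomes the commutative Laurent ring with $X^m\mapsto x^m$, and this map is injective on exponents. Hence the coefficient of $x^m$ in $F|_{v=1}$ is exactly $c_m(1)$. This is the point where the normalization matters. It holds because the paper specializes coefficientwise in the normalized monomials, so I would state it explicitly as the meaning of the hypothesis.

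Next, positivity gives $c_m(1)=0\iff c_m(v)=0$. If $c_m=\sum_i\gamma_iv^i$ with all $\gamma_i\in\N$, then $c_m(1)=\sum_i\gamma_i$, and this vanishes only when every $\gamma_i$ is zero. Therefore $\supp(F)=\{m:c_m(1)\neq0\}=\supp(F|_{v=1})$. By hypothesis the right-hand side is finite, so $F$ has finite support and is a genuine element of the quantum torus.

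I expect the only delicate step to be the well-definedness of the specialization in the completion, meaning that each coefficient $c_m$ is a single finite Laurent polynomial rather than a formal limit. I would handle this with the finiteness of degree slices established above: in the completed torus each monomial $X^m$ receives contributions from only finitely many terms, so $c_m$ is an honest element of $\pos$. Once that is in place, the argument is the two-line positivity observation.
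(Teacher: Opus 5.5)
Your proposal is correct and follows the paper's proof: bounded-degree truncations are finite, specialization is taken coefficientwise in the normalized monomials, and a coefficient in $\pos$ vanishes at $v=1$ only if it is zero, so the two supports coincide and finiteness transfers from $F|_{v=1}$ to $F$. Your closing concern is not needed, since $c_m(v)\in\pos$ is already a hypothesis of the lemma and does not have to be derived.
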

\begin{proof}
The pointed-cone condition makes each bounded-degree truncation finite.
Every $c_m(v)$ is a finite Laurent polynomial, so its value at $v=1$
is defined coefficientwise. Nonnegativity gives
$c_m(1)=0$ if and only if $c_m(v)=0$. Thus the two supports agree,
and finiteness of the specialized support proves the assertion.
\end{proof}

\subsection{The scattering diagram on
  \texorpdfstring{$\mathbb Z^2$}{Z²}}
We use a scattering diagram to construct the lift required by
Lemma~\ref{lem:recognition}. In this two-dimensional setting, a wall
consists of a line or ray together with a formal torus automorphism.
Crossing it applies that automorphism or its inverse according to the
orientation. Consistency means that the ordered product along a path
depends only on its endpoints, away from the walls and their
intersections. Both diagrams and their products are interpreted at each
finite degree before passing to the completion.

A broken line carries a monomial on each segment. At a crossing, one
chooses a term of the wall action as the monomial of the next segment.
The theta series sums the final monomials over broken lines with a
fixed initial exponent and endpoint. Proposition~\ref{prop:wall}
gives positive contributions to this series. The formal construction and
transport statements we use are \cite[Proposition~3.1 and
Theorem~3.2]{DM}.

Take $L=L_0=\Z^2$, skew form $\omega=\det$, parameter $t_0=v^{-1}$,
and cone $\sigma=\R_{\geq0}(-1,0)+\R_{\geq0}(0,1)$.
The incoming axes carry $A_0$ and $B_0$. This is within the general
quantum Lie-algebra setup of \cite[\S2.2--2.3]{DM}: the lattice
indices of $hu$ and $hw$
are $hb$ and $hc$, respectively, so the denominators in
\eqref{eq:psi} are exactly those of its generators
$z^a/(t_0^{|a|}-t_0^{-|a|})$.

The consistent completion \cite[Theorem~2.13]{DM} adds outgoing
mixed rays in the fourth quadrant. Its degrees lie in
$\mathcal P=\N u+\N w$; Lie operations add degrees and cannot create
new pure-axis corrections. A counterclockwise loop gives
\[
 A_0\left(\prod_{q/p\,\mathrm{increasing}} f_{p,q}\right)
 B_0A_0^{-1}B_0^{-1}=1.
\]
Indeed the first three crossings give $B_0^{-1},A_0^{-1},B_0$ in
time order, later crossings act on the left, and the mixed rays are
crossed in decreasing $q/p$ order. Thus its ray factors are
precisely~\eqref{eq:rayfactor}.

By Proposition~\ref{prop:wall}, every outgoing oriented action on every
full-lattice monomial is positive. The same is true for the incoming
actions: their determinant pairings are multiples of $b$ or $c$, and
$\Psi_s(s^2Z)/\Psi_s(Z)=1+sZ$ expresses the appropriate oriented
action as a finite product of positive binomials.

Let $\vartheta_{h,Q}$ be the broken-line theta series with exponent
$h\in\Z^2$ and generic endpoint $Q$. A segment with exponent $m$ has velocity $-m$,
so the bend uses exactly the sign $\det(D,m)$ considered above.
Every coefficient of $\vartheta_{h,Q}$ is a finite positive Laurent
polynomial. To check finiteness, fix an increment $pu+qw$.
There are at most $p+q$ nontrivial bends, finitely many relevant walls
in that truncation, and finitely many increment sequences. For fixed
sequences and a generic endpoint, backward tracing determines at most
one broken line. The case $h=0$ gives the constant series one.

The classical-limit homomorphism of \cite[\S2.2.4]{DM} carries the
incoming actions to those of $1+x_1^{-b}$ and $1+x_2^c$ with primitive
normal exponents. Uniqueness identifies the specialized completion with
the ordinary rank-two diagram. Finite degreewise sums commute with this
specialization. By \cite[Theorem~4.3, Remark~4.5 and Theorem~5.1]{CGMMRSW},
at a first-quadrant endpoint the ordinary theta label $h$ corresponds to
the negative denominator vector, or pointed leading exponent,
\[
 T(h_1,h_2)=(h_1+b\min(h_2,0),h_2).
\]
For $h=(-A+b[N]_+,-N)$, this is $(-A,-N)$. Hence
\[
 F=\vartheta_{h,Q},\qquad F|_{v=1}=x[A,N].
\]
The support of $F$ lies in $h+\mathcal P$, and $\mathcal P$ lies
in the pointed cone $\sigma$. Lemma~\ref{lem:positive-specialization}
therefore gives $F\in\mathcal T_{12}$.

\subsection{Laurentness in adjacent clusters}
Let $\Phi_u=\Ad_{A_0}$ and $\Phi_w=\Ad_{B_0}$. Introduce the
monomial lattice maps
\[
 \tau_L=\begin{pmatrix}b&1\\-1&0\end{pmatrix},\qquad
 \tau_R=\begin{pmatrix}0&-1\\1&0\end{pmatrix}.
\]
Both have determinant one. Direct use of~\eqref{eq:torus} gives
\begin{align*}
 \Phi_u(X^{(b,-1)})&=X^{(b,-1)}+X^{(0,-1)}=X_0,&
 \Phi_u(X^{(1,0)})&=X_1,\\
 \Phi_w(X^{(0,1)})&=X_2,&
 \Phi_w(X^{(-1,0)})&=X^{(-1,0)}+X^{(-1,c)}=X_3.
\end{align*}
For example, $X^{(b,-1)}X^u=v^bX^{(0,-1)}$, canceling the
$v^{-b}$ in the incoming binomial. Thus the ordered-chart
embeddings are $\Phi_u\tau_L$ and $\Phi_w\tau_R$.

Fix a bound on $\mathcal P$-degree. Near a nonzero point of the positive
horizontal axis choose a short downward path meeting no other wall of
that finite truncation. Its upper endpoint and the endpoint defining
$F$ lie in the wall-free first quadrant, so their theta expansions
agree by \cite[Theorem~3.2]{DM}. The crossing sign is negative.
Theta transport identifies the lower-endpoint series with
$\Phi_u^{-1}(F)$ to the chosen degree. Since every endpoint theta is
positive, this proves coefficientwise positivity of $\Phi_u^{-1}(F)$.
For two degree bounds, the resulting expressions are truncations of
the same fixed series $\Phi_u^{-1}(F)$. Thus, for any specified
coefficient, a sufficiently large bound proves its positivity
independently of the endpoints chosen. A short leftward crossing of the positive vertical axis proves
the same for $\Phi_w^{-1}(F)$.

Applying $\tau_L^{-1}$ or $\tau_R^{-1}$ relabels monomials without
changing coefficients. The specializations of these two series are
the adjacent classical Laurent expansions of $x[A,N]$. More explicitly,
embed the adjacent torus in the completion for
$\tau_L^{-1}(\mathcal P)$ or $\tau_R^{-1}(\mathcal P)$.
Each exchange expression is a monomial times a series with constant
term one; its inverse has a unique expansion in that pointed completion.
Thus the specialized formal inverse agrees with the finite classical
Laurent expression. The inverse wall actions add degrees in
$\mathcal P$, and the lattice maps send this monoid into the pointed
monoids $\tau_L^{-1}(\mathcal P)$ and $\tau_R^{-1}(\mathcal P)$.
Applying Lemma~\ref{lem:positive-specialization} in these completions
proves finite support of both inverse expressions.
The two inverse expressions are adjacent Laurent polynomials
whose embeddings equal $F$. We have proved the three-torus hypotheses
of Lemma~\ref{lem:recognition}. It follows that $G_{bc}(A,N)$ is
positive in the initial cluster for every integer label when $b\mid c$.

\subsection{Symmetries and universal positivity}
\begin{proof}[Proof of Theorem~\ref{thm:main}]
The recurrence is invariant under
\[
 (b,c,A,N,p,q)\longmapsto(c,b,N,A,q,p).
\]
Induction on $p+q$ therefore gives
$e_{bc;A,N}(p,q)=e_{cb;N,A}(q,p)$, including nonpositive labels.
The case $c\mid b$ follows.

Consider the semilinear reflections
$\sigma_\ell(X_j)=X_{2\ell-j}$, $\sigma_\ell(v)=v^{-1}$.
The greedy reflection formulas of \cite[Proposition~5.1,
Theorem~5.8 and the proof of Theorem~1.9]{LLRZ} give, for $(A,N)\in\Z^2$,
\[
 \sigma_1G_{bc}(A,N)=G_{bc}(A,c[A]_+-N),\qquad
 \sigma_2G_{bc}(A,N)=G_{bc}(b[N]_+-A,N).
\]
These reflections generate
translations by two and act transitively on unordered adjacent pairs.
After transporting a positive initial expansion, restoring ascending
order changes no sign, since
\[
 v^{-rs}X_{j+1}^rX_j^s=v^{rs}X_j^sX_{j+1}^r.
\]
Inverting $v$ preserves nonnegative coefficients. The transported
expansion is therefore positive in every ordered cluster, as required.

\end{proof}

\bigskip
\begin{flushleft}
\small
\textsc{Zhili College, Tsinghua University, Beijing 100084, China}\\[2pt]
\textit{Email address:}\enspace
\href{mailto:tangqy24@mails.tsinghua.edu.cn}{\texttt{tangqy24@mails.tsinghua.edu.cn}}
\end{flushleft}
\end{document}